\colorlet{darkblue}{blue!55!black}
\colorlet{darkcyan}{cyan!50!black}
\colorlet{darkgreen}{green!60!black}
\def\eqref#1{\textcolor{darkblue}{(\ref{#1})}}
\crefname{hypothesis}{hypothesis}{hypotheses}
\Crefname{hypothesis}{Hypothesis}{Hypotheses}
\let\oldequation\equation
\let\oldendequation\endequation
\renewenvironment{equation}{\linenomathNonumbers\oldequation}{\oldendequation\endlinenomath}
\let\expandafter\oldequationstar\csname equation*\endcsname
\let\expandafter\oldendequationstar\csname endequation*\endcsname
\renewenvironment{equation*}{\linenomathNonumbers\oldequationstar}{\oldendequationstar\endlinenomath}
\let\oldalign\align
\let\oldendalign\endalign
\let\expandafter\oldalignstar\csname align*\endcsname
\let\expandafter\oldendalignstar\csname endalign*\endcsname
\renewenvironment{align*}{\linenomathNonumbers\oldalignstar}{\oldendalignstar\endlinenomath}
\newcounter{intro}
\newcounter{HypCounter}
\newtheorem{introthm}[intro]{Theorem}
\newtheorem{introcor}[intro]{Corollary}
\newtheorem{introprop}[intro]{Proposition}
\theoremstyle{plain}
\newtheorem{theorem}{Theorem}[section]
\newtheorem{lemma}[theorem]{Lemma}
\newtheorem{corollary}[theorem]{Corollary}
\newtheorem{proposition}[theorem]{Proposition}
\theoremstyle{definition}
\newtheorem{definition}[theorem]{Definition}
\newtheorem{example}[theorem]{Example}
\newtheorem*{hypothesis*}{Hypothesis}
\newtheorem{remark}[theorem]{Remark}
\newtheorem*{ack}{Acknowledgements}
\newtheorem*{notation}{Notation}
\numberwithin{equation}{section}
\numberwithin{theorem}{section}
\title[Strong generation for module categories]{Strong generation for module categories}
\author[S.~Dey]{Souvik Dey}
\address{S.~Dey,
Faculty of Mathematics and Physics,
Department of Algebra,
Charles University, 
Sokolovsk\'{a} 83, 186 75 Praha, 
Czech Republic, \url{https://orcid.org/0000-0001-8265-3301}}
\email{souvik.dey@matfyz.cuni.cz}
\author[P.~Lank]{Pat Lank}
\address{P.~Lank,
Department of Mathematics,
University of South Carolina, 
Columbia, SC 29208,
U.S.A.}
\email{plankmathematics@gmail.com}
\author[R.~Takahashi]{Ryo Takahashi}
\address{R.~Takahashi,
Graduate School of Mathematics,
Nagoya University,
\break Furocho, Chikusaku, Nagoya 464-8602, Japan}
\email{takahashi@math.nagoya-u.ac.jp}
\date{\today}
\keywords{strong generation, module category, derived category, Rouquier dimension, quasi-excellent, Frobenius}
\subjclass[2020]{13D09 (primary), 13C60, 13D05, 13D02, 13F40} 
\begin{document}

\begin{abstract}
    This article investigates strong generation within the module category of a commutative Noetherian ring. We establish a criterion for such rings to possess strong generators within their module category, addressing a question raised by Iyengar and Takahashi. As a consequence, this not only demonstrates that any Noetherian quasi-excellent ring of finite Krull dimension satisfies this criterion, but applies to rings outside this class. Additionally, we identify explicit strong generators within the module category for rings of prime characteristic, and establish upper bounds on Rouquier dimension in terms of classical numerical invariants for modules.
\end{abstract}

\maketitle

\section{Introduction}
\label{sec:intro}

This work is concerned with the existence of strong generators in the category
of finitely generated modules over a commutative Noetherian ring. Our main result establishes a useful sufficiency criterion for detecting strong generators in module categories and their associated bounded derived categories, as well as for the vanishing of cohomology annihilators. Consequently, we make further strides towards representation theory of commutative rings in studying the structure of their module categories.

Recall a concept of generation in a triangulated category, as introduced by \cite{Bondal/VandenBergh:2003}, which we'll apply to categories constructed from modules over a ring. Let $R$ be a Noetherian ring, $\operatorname{mod}R$ its category of finitely generated modules, and $D^b(\operatorname{mod}R)$ its associated bounded derived category. For an object $G$ in $D^b (\operatorname{mod}R)$, $\operatorname{thick}^n (G)$ is the smallest full subcategory generated by $G$ using only shifts, retracts of direct sums, and at most $n$ cones. An object $G$ in $D^b(\operatorname{mod}R)$ is said to be a \textit{strong generator} if $\operatorname{thick}^n (G) = D^b(\operatorname{mod}R)$ for some $n$, and the \textit{Rouquier dimension} of $D^b( \operatorname{mod}R)$ is the smallest such integer $n$. See \Cref{sec:generation} for details.

There has been significant activity towards understanding which Noetherian rings $R$ that enjoy property that $D^b(\operatorname{mod}R)$ admits a strong generator \cite{Rouquier:2008, Iyengar/Takahashi:2016, Aoki:2021}, as well as explicitly identifying such objects \cite{BILMP:2023}. For instance, if $R$ is a quasi-excellent Noetherian ring of finite Krull dimension (i.e., an algebra of finite type over a field), then $D^b(\operatorname{mod}R)$ admits a strong generator \cite{Aoki:2021}.

We take a different approach towards studying the existence of strong generators in the bounded derived category by working with a concept introduced for module categories. Let $R$ be a Noetherian ring. An object $G$ in $\operatorname{mod}R$ is said to be a \textit{strong generator} if there exist non-negative integers $n$ and $s$ such that the $s$-th syzygy $\Omega^s_R(M)$ of any finitely generated $R$-module $M$ can be obtained from $G$ using at most $n$ extensions.

This concept was introduced in \cite{Iyengar/Takahashi:2016} and was shown to be closely related to the vanishing of the cohomology annihilator ideal $\operatorname{ca}(R)$ of $R$. That is, the existence of ring elements $r$ in $R$ for which there exists an integer $n$ such that $r$ annihilates $\operatorname{Ext}^n_R(A,B)$ for all objects $A,B$ in $\operatorname{mod}R$ is linked to the existence of strong generators in $\operatorname{mod}R$. The following result makes further progress in this direction.

\begin{introthm}\label{thm:strong_generation}
    For any Noetherian ring $R$, the following conditions are equivalent:
    \begin{enumerate}
        \item $\operatorname{ca}(R/\mathfrak{p})\neq 0$ for every prime ideal $\mathfrak{p}$ in $R$
        \item $\operatorname{mod}R/\mathfrak{p}$ admits a strong generator for every prime ideal $\mathfrak{p}$ of $R$
        \item $D^b (\operatorname{mod}R/\mathfrak{p})$ admits a strong generator for every prime ideal $\mathfrak{p}$ of $R$.
    \end{enumerate}
    If any one of these conditions is satisfied, then $R$ has finite Krull dimension, and both $\operatorname{mod}R/I$ and $D^b (\operatorname{mod}R/I)$ admit strong generators where $I$ is any ideal of $R$.
\end{introthm}

\Cref{thm:strong_generation}, coupled with the main result of \cite{Aoki:2021}, asserts that $\operatorname{mod}R$ admits a strong generator if $R$ is a quasi-excellent Noetherian ring of finite Krull dimension (see \Cref{cor:quasi_excellent_strong_gen}). This affirmatively answers \cite[Question 5.5]{Iyengar/Takahashi:2016}. As an interesting consequence of this, we observe that if $R$ is a local  ring whose completion is an isolated singularity, then $\operatorname{mod } R$ admits a strong generator. See \Cref{prop:cmiso} for details. 
However, these are not the only applications of \Cref{thm:strong_generation}. For example, although any discrete valuation ring satisfies the conditions of \Cref{thm:strong_generation}, they are not always quasi-excellent (e.g.\ \cite[Example 3.8]{Dey/Lank:2024}). Hence, \Cref{thm:strong_generation} offers a new approach to detecting the finiteness for Rouquier dimension of $D^b (\operatorname{mod}R)$ by simplifying the problem into a strictly module-theoretic context, thereby opening opportunities to explore potentially larger classes of rings.

While proving the existence of an object is one thing, explicitly constructing such an object is another matter. This brings us to an important application of \Cref{thm:strong_generation}. Recall that a Noetherian ring $R$ of prime characteristic is said to be \textit{$F$-finite} if its Frobenius morphism $F\colon R \to R$ is finite. In \cite{BILMP:2023}, it was shown that $F_\ast^e R$ (i.e. the $e$-th iterate of $F\colon R \to R$) is a strong generator for $D^b(\operatorname{mod} R)$. The following statement builds on this result, allowing us to explicitly identify a strong generator for rings of prime characteristic.

\begin{introcor}\label{cor:f_finite_strong_gen_module_cat}
    If $R$ is an $F$-finite Noetherian ring, then there exists $e,c\geq 0$ such that $\bigoplus^c_{i=0} \Omega_R^i (F_\ast^e R)$ is a strong generator for $\operatorname{mod}R$.
\end{introcor}

The only undetermined parameter in \Cref{cor:f_finite_strong_gen_module_cat} is the number of syzygies required, which can be made explicit in the local isolated singularity case.

\begin{introprop}\label{prop:f_finite_isolated singularity_module_category}
    If $R$ is an $F$-finite local ring with isolated singularity of Krull dimension $d$, then, for all $e\gg 0$, $R \oplus (\bigoplus_{i=0}^{d} \Omega_R^i (F^e_\ast R))$ is a strong generator for $\operatorname{mod} R$.    
\end{introprop} 

Our efforts extend a bit further past \Cref{cor:f_finite_strong_gen_module_cat}. Motivated by the proof technique of \Cref{thm:strong_generation}, we establish sharp upper bounds on the Rouquier dimension for $D^b (\operatorname{mod}R)$. Recall that for a Noetherian ring $R$, a chain of ideals $0=I_0\subseteq I_1\subseteq\cdots\subseteq I_n=R$ exists, where each $R$-module $I_i/I_{i-1}$ is isomorphic to $R/\mathfrak{p}_i$ for some prime ideal $\mathfrak{p}_i$
of $R$. See \cite[Theorem 6.4]{Matsumura:1986}. Our strategy, akin to that of \cite[Proposition 3.9]{Aihara/Takahashi:2015}, for bounding the Rouquier dimension of $D^b(\operatorname{mod}R)$ involves  studying the length of such a chain of ideals \Cref{lem:bounds_via_min_primes}. The following result establishes a desirable upper bound on Rouquier dimension in terms of well-established numerical invariants for rings and modules. See \Cref{rmk:bounds} for details.

\begin{introprop}\label{prop:rouq_dim_bounds}
    Let $R$ be a Noetherian ring.
    \begin{enumerate}
        \item If $\operatorname{Ass}(R)=\operatorname{Min}(R)$, then the Rouquier dimension of $D^b(\operatorname{mod} R)$ is bounded above by the following:
        \begin{displaymath}
            \big(\sum_{\mathfrak{p} \in \operatorname{Min}(R)} \ell\ell(R_\mathfrak{p})\big) \underset{\mathfrak{p} \in \operatorname{Min}(R)}{\sup}\{1+\dim D^b(\operatorname{mod} R/\mathfrak{p}) \} -1
        \end{displaymath}
        where $\ell \ell(R_\mathfrak{p})$ denotes Loewy length of the Artinian local ring $R_\mathfrak{p}$.
        \item If $R$ is reduced, then the Rouquier dimension of $D^b (\operatorname{mod}R)$ is bounded above by the following:
        \begin{displaymath}
            |\operatorname{Min}(R)| \underset{\mathfrak{p} \in \operatorname{Min}(R)}{\sup} \{ 1+ \dim D^b (\operatorname{mod}R/\mathfrak{p}) \} -1.
        \end{displaymath}
        \item If $R$ is a Stanley-Reisner ring, then the Rouquier dimension of $D^b (\operatorname{mod}R)$ is bounded above by the following:
        \begin{displaymath}
            |\operatorname{Min}(R)| ( 1 + \dim R) - 1.
        \end{displaymath}
    \end{enumerate}
\end{introprop}

\begin{notation}
    All rings considered are assumed to be commutative and unital. Let $R$ be a Noetherian ring. The category of finitely generated $R$-modules is denoted by $\operatorname{mod}R$, and its associated bounded derived category is denoted $D^b (\operatorname{mod}R)$. The collection of minimal primes of $R$ is denoted $\operatorname{Min}(R)$, and set of associated primes of a module $M$ is denoted $\operatorname{Ass}_R(M)$; the case of $M=R$ we write $\operatorname{Ass}(R)$.
\end{notation}

\begin{ack}
    The authors would like to thank discussions with Srikanth Iyengar,
    Janina Letz, and Josh Pollitz for their wonderfully valuable comments and
    perspectives shared that led to improvements on this manuscript. Additionally, we thank the anonymous referee for helpful suggestions to our work. Souvik Dey
    was partially supported by the Charles University Research Center program
    No.UNCE/SCI/022 and a grant GA \v{C}R 23-05148S from the Czech Science
    Foundation. Ryo Takahashi was partly supported by JSPS Grant-in-Aid for
    Scientific Research 23K03070. Pat Lank was partly supported by National
    Science Foundation under Grant No. DMS-2302263.
\end{ack}

\section{Generation}
\label{sec:generation}

This section briefly recalls background for generation in module categories and its corresponding bounded derived category. The primary sources are \cite{Bondal/VandenBergh:2003, Iyengar/Takahashi:2016, Avramov/Buchweitz/Iyengar/Miller:2010}. Let $R$ be a Noetherian ring.

\subsection{Thick subcategories in \texorpdfstring{$D^b(\operatorname{mod}R)$}{t}}
\label{sec:thick_subcategories_derived_category}

\begin{definition}
    Let $G$ be an object of $D^b (\operatorname{mod}R)$.
    \begin{enumerate}
        \item A full triangulated subcategory $\mathcal{S}$ of $D^b(\operatorname{mod} R)$ is said to be \textbf{thick} when it is closed under direct summands.
        \item The smallest thick subcategory of $D^b(\operatorname{mod}R)$ containing $G$ is denoted $\operatorname{thick} (G)$.
        \item Consider the following additive subcategories of $D^b (\operatorname{mod}R)$:
        \begin{enumerate}
            \item $\operatorname{thick}^0 (G)$ is the full subcategory consisting of all objects isomorphic to the zero object 
            \item $\operatorname{thick}^1 (G)$ is the full subcategory consisting of bounded complexes which are direct summands of objects of the form $\bigoplus_{s\in \mathbb{Z}} G^{\oplus r_s}[s]$ whose differential is zero; i.e. a finite direct sum of shifts of $G$. 
            \item If $n\geq 2$, then $\operatorname{thick}^n (G)$ is the full subcategory consisting of objects which are direct summands of objects $E$ fitting into a distinguished triangle
            \begin{displaymath}
            A \to E \to B \to A[1]
            \end{displaymath}
            where $A$ is in $\operatorname{thick}^{n-1} (G)$ and $B$ is in $\operatorname{thick}^1 (G)$.
        \end{enumerate}
    \end{enumerate}
\end{definition}

\begin{remark}
    This yields an ascending filtration of additive subcategories:
    \begin{displaymath}
        \begin{aligned}
            \operatorname{thick}^0  & (G) \subseteq \operatorname{thick}^1 (G) \subseteq \cdots \\& \subseteq \operatorname{thick}^n (G) \subseteq \cdots \subseteq \bigcup^\infty_{n=0} \operatorname{thick}^n (G)= \operatorname{thick} (G).
        \end{aligned}
    \end{displaymath}
     This filtration keeps track of the minimal number of required cones required to 'build' an object from $G$. 
\end{remark}
    
\begin{definition}
    Let $E,G$ be objects of $D^b (\operatorname{mod}R)$.
    \begin{enumerate}
        \item If $E$ is an object of $\operatorname{thick} (G)$, then we say that $G$ \textbf{finitely builds} $E$.
        \item $G$ is a \textbf{classical generator} for $D^b (\operatorname{mod}R)$ if $\operatorname{thick} (G) = D^b (\operatorname{mod}R)$. 
        \item $G$ is called a \textbf{strong generator} if there exists $n\geq 0$ such that $\operatorname{thick}^{n+1} (G) = D^b (\operatorname{mod}R)$, and the smallest such value $n$ is its \textbf{generation time}. 
        \item The \textbf{Rouquier dimension} of $D^b (\operatorname{mod}R)$ is the minimum over all possible generation times associated to strong generators, and it is denoted $\dim D^b(\operatorname{mod}R)$.
    \end{enumerate}
\end{definition}

\begin{example}
    If $R$ is a quasi-excellent ring of finite Krull dimension, then $D^b (\operatorname{mod}R)$ admit strong generators \cite[Main Theorem]{Aoki:2021}. For instance, if $R$ is additionally regular, then $R$ finitely builds every object $E$ in $D^b (\operatorname{mod}R)$ in at most $\dim R + 1$ cones \cite{Christensen:1998}.
\end{example}

\begin{lemma}\label{lem:reduced_Rouquier_bound}
    Suppose $R$ is a reduced ring. If $D^b(\operatorname{mod}R)$ has finite Rouquier dimension, then $R$ has finite Krull dimension and $\dim D^b(\operatorname{mod}R)$ is at least $\dim R -1$. 
\end{lemma}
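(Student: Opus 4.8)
The plan is to reduce, by localization, to a lower bound on the Rouquier dimension of a local ring in terms of its Krull dimension, and to obtain that bound from the Ghost Lemma. Set $n=\dim D^b(\operatorname{mod}R)$, which we assume is finite. It suffices to prove $\operatorname{ht}\mathfrak{p}\le n+1$ for every prime $\mathfrak{p}$ of $R$: this gives $\dim R=\sup_{\mathfrak{p}}\operatorname{ht}\mathfrak{p}\le n+1<\infty$ and hence $n\ge\dim R-1$. Fix $\mathfrak{p}$. Localization induces an exact, essentially surjective functor $D^b(\operatorname{mod}R)\to D^b(\operatorname{mod}R_{\mathfrak{p}})$ (a Verdier quotient), which therefore carries a strong generator of generation time $n$ to one of generation time at most $n$; thus $\dim D^b(\operatorname{mod}R_{\mathfrak{p}})\le n$. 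As $R_{\mathfrak{p}}$ is again a reduced local ring, of Krull dimension $\operatorname{ht}\mathfrak{p}$, the claim reduces to the assertion: \emph{if $(A,\mathfrak{m})$ is a reduced local ring of Krull dimension $d$, then $\dim D^b(\operatorname{mod}A)\ge d-1$.}

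For this local assertion I would invoke the Ghost Lemma: if $\operatorname{thick}^t(G)=D^b(\operatorname{mod}A)$, then every composite of $t$ consecutive $G$-ghost morphisms vanishes, a morphism $f$ being $G$-ghost when $\operatorname{Hom}_{D^b(\operatorname{mod}A)}(G[i],f)=0$ for all $i\in\mathbb{Z}$; hence it is enough to produce, for an arbitrary strong generator $G$, a nonzero composite of $d-1$ such morphisms. Choose a chain of primes $\mathfrak{p}_0\subsetneq\mathfrak{p}_1\subsetneq\cdots\subsetneq\mathfrak{p}_d=\mathfrak{m}$ of length $d=\dim A$; then $\mathfrak{p}_0$ is automatically a minimal prime, $A/\mathfrak{p}_0$ is a $d$-dimensional local domain, and $A_{\mathfrak{p}_0}$ is a field since $A$ is reduced. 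Using the short exact sequences relating the cyclic modules $A/\mathfrak{p}_i$, together with prime filtrations of the intermediate subquotients (\cite[Theorem 6.4]{Matsumura:1986}), one assembles --- along the ghost-sequence technique of \cite{Rouquier:2008} --- a nonzero composite of $d-1$ $G$-ghost morphisms, so the generation time of $G$ is at least $d-1$.

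The main obstacle is precisely this local lower bound in the absence of regularity: the residue field $k$ by itself does not force a long ghost sequence, since $\operatorname{Ext}_A^{*}(k,k)$ may admit no nonzero products in a range comparable to $d$. One must instead build the ghost morphisms one prime at a time along the chain, crucially using that a reduced ring is generically regular and verifying at each stage that the running composite is not annihilated by $\operatorname{Hom}_{D^b(\operatorname{mod}A)}(G,-)$; making this count come out to $d-1$, and in particular handling non-excellent $A$, is the technical heart. In the write-up this is most cleanly dispatched by quoting the known inequality $\dim D^b(\operatorname{mod}A)\ge\dim A-1$ (or even $\ge\dim A$) for commutative Noetherian reduced rings, so that any delicacy concerning non-excellent rings is confined to that reference.
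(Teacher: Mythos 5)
Your proposal is correct and, in its final form, is essentially the paper's own argument: localize at each prime, use that Rouquier dimension does not increase under localization to $R_{\mathfrak p}$ (the paper quotes \cite[Lemma 4.2(3)]{Aihara/Takahashi:2015} for your Verdier-quotient step), and then invoke the known local lower bound $\dim D^b(\operatorname{mod}A)\ge\dim A-1$ (the paper's \cite[Corollary 6.6]{Aihara/Takahashi:2015}), which also yields finiteness of $\dim R=\sup_{\mathfrak p}\operatorname{ht}\mathfrak p$. The ghost-lemma sketch in the middle is not needed once you quote that reference, exactly as you conclude.
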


\begin{proof} 
    Let $\mathfrak p  \in \operatorname{Spec}(R)$. Then $R_{\mathfrak p}$ is a reduced local ring such that $D^b(\operatorname{mod}R_{\mathfrak p})$ has finite Rouquier dimension \cite[Lemma 4.2(3)]{Aihara/Takahashi:2015}. Recall that Noetherian local rings have finite Krull dimension. Note that $\dim(R_{\mathfrak p})-1\le \dim D^b(\operatorname{mod}R_{\mathfrak p})$ \cite[Corollary 6.6]{Aihara/Takahashi:2015}. Moreover, we know $\dim D^b(\operatorname{mod}R_{\mathfrak p})\le \dim D^b(\operatorname{mod}R)$ via \cite[Lemma 4.2(3)]{Aihara/Takahashi:2015}. Combining the two inequalities, we then get $\dim(R_{\mathfrak p})-1\le \dim D^b(\operatorname{mod}R)$. Since $\mathfrak p \in \operatorname{Spec}(R)$ is arbitrary, we conclude $\sup\{\dim(R_{\mathfrak p})\mid \mathfrak p\in \operatorname{Spec}(R)\}-1\le \dim D^b(\operatorname{mod}R)$. This finishes the proof since $\dim R=\sup\{\dim(R_{\mathfrak p})\mid \mathfrak p\in \operatorname{Spec}(R)\}$. 
\end{proof}  

\subsection{Classical generation in \texorpdfstring{$\operatorname{mod}R$}{}}
\label{sec:thick_subcategories_module_category}

\begin{definition}
    Let $\mathcal{S}$ be a subcategory of $\operatorname{mod}R$.
    \begin{enumerate}
        \item A full additive subcategory of $\operatorname{mod}R$ is \textbf{thick} when it is closed under direct summands and for any short exact sequence
        \begin{displaymath}
            0 \to A \to B \to C \to 0,
        \end{displaymath}
        if two of the three belong to it, then so does the third.
        \item The smallest thick subcategory in $\operatorname{mod}R$ containing $\mathcal{S}$ is denoted by $\operatorname{thick}_{\operatorname{mod}R} (\mathcal{S})$. 
        \item An object $G$ in $\operatorname{mod}R$ is said to be a \textbf{classical generator} when $\operatorname{thick}_{\operatorname{mod}R} (G) = \operatorname{mod}R$. 
    \end{enumerate}
\end{definition}

\begin{example}\label{ex:f_finite_ring}
    Suppose $R$ is a Noetherian ring of prime characteristic $p$ and of finite Krull dimension. Recall the Frobenius morphism $F\colon R \to R$ is given by $r\mapsto r^p$. If $F\colon R \to R$ is a finite ring homomorphism, then $R$ is said to be \textbf{$F$-finite}. That is, $F_\ast R \in \operatorname{mod}R$ where $F_\ast R$ denotes restriction of scalars along $F\colon R \to R$. For instance, any ring that is essentially of finite type over a perfect field of prime characteristic is $F$-finite. These rings are excellent \cite[Theorem 2.5]{Kunz:1976}. In \cite[Corollary 3.9]{BILMP:2023} it was shown there exists $e >0$ such that $F_\ast^e R$ is a strong generator for $D^b (\operatorname{mod}R)$. By \cite[Theorem 1]{Krause/Stevenson:2013} (see also \cite[Theorem 10.5]{Takahashi:2023}), it can be checked that $R\oplus F_\ast^e R$ is a classical generator for $\operatorname{mod}R$.
\end{example}

\begin{definition}
    Suppose $\mathcal{S}$ is a subcategory of $\operatorname{mod}R$. 
    \begin{enumerate}
        \item $\operatorname{add}\mathcal{S}$ is the full subcategory of $\operatorname{mod}R$ consisting of direct summands of finite direct sums of objects in $\mathcal{S}$
        \item $\operatorname{thick}_{\operatorname{mod}R}^0(\mathcal{S})$ is the full subcategory consisting of objects isomorphic to the zero module
        \item $\operatorname{thick}_{\operatorname{mod}R}^1(\mathcal{S}) := \operatorname{add}\mathcal{S}$
        \item $\operatorname{thick}_{\operatorname{mod}R}^n(\mathcal{S})$, for $n\geq 2$, is the full subcategory of $\operatorname{mod}R$ whose objects are direct summands of an object fitting into a short exact sequence
        \begin{displaymath}
            0 \to X \to Y \to Z \to 0
        \end{displaymath}
        where among the other two, one belongs to $\operatorname{thick}_{\operatorname{mod}R}^{n-1}(\mathcal{S})$ and the other in $\operatorname{thick}_{\operatorname{mod}R}^1(\mathcal{S})$. 
    \end{enumerate}
\end{definition}
    
\begin{remark}
    This forms an ascending chain of subcategories in $\operatorname{mod}R$:
    \begin{displaymath}
        \operatorname{thick}_{\operatorname{mod}R}^0 (\mathcal{S}) \subseteq \operatorname{thick}_{\operatorname{mod}R}^1 (\mathcal{S}) \subseteq \operatorname{thick}_{\operatorname{mod}R}^2(\mathcal{S}) \subseteq \cdots \subseteq \operatorname{thick}_{\operatorname{mod}R}(\mathcal{S}).
    \end{displaymath}
\end{remark}

\subsection{Strong generation in \texorpdfstring{$\operatorname{mod}R$}{}}
\label{sec:extension_construction}

\begin{remark}
    Recall a syzygy of an object $M$ in $\operatorname{mod}R$ is the kernel of an epimorphism $\pi \colon P \to M$ where $P$ is a finitely generated projective $R$-module. This is well defined up to stable isomorphism. For any object $M$ in $\operatorname{mod}R$ and $n \geq 0$, an $n$-th syzygy of $M$ is denoted $\Omega^n_R (M)$ and is defined as follows. Set $\Omega^0_R (M):= M$. If $n=1$, then $\Omega^1_R (M)$ is a syzygy of $M$. For $n>1$, then $\Omega^n_R (M)$ is a syzygy of $\Omega^{n-1}_R (M)$.
\end{remark}

\begin{definition}
    Let $\mathcal{S}$ be a collection of objects in $\operatorname{mod}R$.
    \begin{enumerate}
        \item $|\mathcal{S}|_0$ be the full subcategory consisting of objects isomorphic to the zero module 
        \item $|\mathcal{S}|_1 := \operatorname{add}\mathcal{S}$
        \item $|\mathcal{S}|_n$ is the full subcategory of $\operatorname{mod}R$, for $n\geq 2$, whose objects $M$ fit into a short exact sequence
        \begin{displaymath}
            0 \to Y \to M\oplus W \to X \to 0
        \end{displaymath}
        where amongst $X$ and $Y$, one belongs to $|\mathcal{S}|_{n-1}$ and the other in $|\mathcal{S}|_1$.
    \end{enumerate}
\end{definition}

\begin{remark}
    These subcategories form an ascending chain $|\mathcal{S}|_1 \subseteq |\mathcal{S}|_2 \subseteq \cdots$. If $|\mathcal{S}|_a \star |\mathcal{S}|_b$ denotes the full subcategory whose objects $E$ fit into a short exact sequence
    \begin{displaymath}
        0 \to X \to E \oplus E^\prime \to Y \to 0
    \end{displaymath}
    where $X$ is in $|\mathcal{S}|_a$ and $Y$ is in $|\mathcal{S}|_b$, then $|\mathcal{S}|_a \star |\mathcal{S}|_b = |\mathcal{S}|_{a+b}$. For further details, see  \cite[$\S 5$]{Dao/Takahashi:2014}.
\end{remark}

\begin{definition}
    If $G\in \operatorname{mod}R$, then it is a \textbf{strong generator} when $R$ is a direct summand of $G$ and $\Omega^s_R (\operatorname{mod}R)$ is contained in $|G|_n$ for some $s,n\geq 0$ where $\Omega_R^s (\operatorname{mod}R)$ is the collection of objects of the form $\Omega^s_R (M)$ where $M$ is an object of $\operatorname{mod}R$.
\end{definition}

\begin{remark}\label{rmk:thick_categories_module_property}
    If $G\in \operatorname{mod}R$, and $s,n\geq 0$, then
    \begin{enumerate}
        \item $|G|_n \subseteq \operatorname{thick}_{\operatorname{mod}R}^n (G)$;
        \item $\Omega_R^s (\operatorname{mod}R) \subseteq |G|_n \implies \operatorname{thick}_{\operatorname{mod}R}^{s+n} (R\oplus G) = \operatorname{mod}R$.
    \end{enumerate}
    This is \cite[Proposition 4.5 \& Corollary 4.6]{Iyengar/Takahashi:2016}. 
\end{remark}

\begin{example}\label{ex:Iyengar/Takahashi_strong_gen_examples}
    Suppose $R$ has finite Krull dimension $d$.
    \begin{enumerate}
        \item If $R$ is an Artinian ring, then $\operatorname{mod}R \subseteq |R\oplus R/J(R)|_{\ell\ell(R)}$ where $J(R)$ is the Jacobson radical of $R$ and $\ell\ell(R)$ is the Loewy length.
        \item If $R$ is an excellent equicharacteristic local ring, then $\operatorname{mod}R$ admits a strong generator \cite[Theorem 5.3]{Iyengar/Takahashi:2016}.
        \item If $R$ is essentially of finite type over a field, then $\operatorname{mod}R$ admits a strong generator \cite[Theorem 5.4]{Iyengar/Takahashi:2016}.
    \end{enumerate}
\end{example}

\begin{lemma}\label{lem:strong_generation_module_to_derived}
    Suppose $G\in \operatorname{mod}R$.
    \begin{enumerate}
        \item If there exist $n,s\geq 0$ such that $\Omega_R^s (\operatorname{mod}R) \subseteq |G|_n$, then $\operatorname{thick}^{2(n+s)} (G) = D^b (\operatorname{mod}R)$.
        \item If there exists $n\geq 0$ such that $\operatorname{thick}_{\operatorname{mod}R}^n (G)= \operatorname{mod}R$, then $\operatorname{thick}^{2n} (G) = D^b (\operatorname{mod}R)$.
    \end{enumerate}
\end{lemma}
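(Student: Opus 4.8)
The plan is to reduce both statements to two auxiliary facts, used together with the closure of each $\operatorname{thick}^k(G)$ under shifts, finite direct sums and summands and the subadditivity $\operatorname{thick}^a(G)\star\operatorname{thick}^b(G)\subseteq\operatorname{thick}^{a+b}(G)$ in $D^b(\operatorname{mod}R)$ (the triangulated analogue, via the octahedral axiom, of the identity $|\mathcal{S}|_a\star|\mathcal{S}|_b=|\mathcal{S}|_{a+b}$ recalled above; see \cite{Bondal/VandenBergh:2003, Rouquier:2008}). Fact (i): regarding a module as a complex concentrated in degree $0$, one has $\operatorname{thick}_{\operatorname{mod}R}^k(G)\subseteq\operatorname{thick}^k(G)$ for all $k\ge 0$. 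Fact (ii): if $\operatorname{mod}R\subseteq\operatorname{thick}^m(G)$ inside $D^b(\operatorname{mod}R)$, then $\operatorname{thick}^{2m}(G)=D^b(\operatorname{mod}R)$.

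I would prove (i) by induction on $k$: the case $k\le 1$ holds since $\operatorname{thick}_{\operatorname{mod}R}^1(G)=\operatorname{add}G\subseteq\operatorname{thick}^1(G)$, and if $k\ge 2$ and $M\in\operatorname{thick}_{\operatorname{mod}R}^k(G)$, then $M$ is a summand of an object $Y$ fitting in a short exact sequence $0\to X\to Y\to Z\to 0$ with one of $X,Z$ in $\operatorname{thick}_{\operatorname{mod}R}^{k-1}(G)$ and the other in $\operatorname{thick}_{\operatorname{mod}R}^1(G)$; the associated distinguished triangle $X\to Y\to Z\to X[1]$, the inductive hypothesis, and subadditivity put $Y$—and hence its summand $M$—in $\operatorname{thick}^k(G)$ (either ordering of $X,Z$ works). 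For (ii), let $X=(\cdots\to X^{i-1}\xrightarrow{d^{i-1}}X^i\xrightarrow{d^i}X^{i+1}\to\cdots)$ be a bounded complex of finitely generated modules and let $Z^\bullet\subseteq X$ be the subcomplex of cocycles, $Z^i=\ker d^i$. Since $d^i$ vanishes on $Z^i$ and $\operatorname{im}d^i\subseteq\ker d^{i+1}$, both $Z^\bullet$ and the quotient complex $X/Z^\bullet\cong\bigoplus_i\operatorname{im}(d^i)[-i]$ carry the zero differential, so each is a finite direct sum of shifts of finitely generated modules and therefore lies in $\operatorname{thick}^m(G)$ by hypothesis together with closure under shifts and sums. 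The short exact sequence of complexes $0\to Z^\bullet\to X\to X/Z^\bullet\to 0$ then gives a distinguished triangle witnessing $X\in\operatorname{thick}^m(G)\star\operatorname{thick}^m(G)\subseteq\operatorname{thick}^{2m}(G)$.

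Part (2) follows at once: $\operatorname{thick}_{\operatorname{mod}R}^n(G)=\operatorname{mod}R$ gives $\operatorname{mod}R\subseteq\operatorname{thick}^n(G)$ by (i), and (ii) with $m=n$ gives $\operatorname{thick}^{2n}(G)=D^b(\operatorname{mod}R)$. For part (1) I would use Remark~\ref{rmk:thick_categories_module_property}(2) to pass from $\Omega_R^s(\operatorname{mod}R)\subseteq|G|_n$ to $\operatorname{thick}_{\operatorname{mod}R}^{s+n}(R\oplus G)=\operatorname{mod}R$, and then apply part (2) to the generator $R\oplus G$ to get $\operatorname{thick}^{2(n+s)}(R\oplus G)=D^b(\operatorname{mod}R)$; since $R$ is a direct summand of $G$ this reads $\operatorname{thick}^{2(n+s)}(G)=D^b(\operatorname{mod}R)$. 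A direct alternative: for $M\in\operatorname{mod}R$, with a projective resolution $\cdots\to P_1\to P_0\to M\to 0$, the complex $T_M=(0\to\Omega_R^sM\to P_{s-1}\to\cdots\to P_0\to 0)$—with $\Omega_R^sM\hookrightarrow P_{s-1}$ the canonical inclusion—is quasi-isomorphic to $M$, and the short exact sequence of complexes $0\to Q_M\to T_M\to\Omega_R^sM[s]\to 0$ with $Q_M=(P_{s-1}\to\cdots\to P_0)$ (a bounded complex of finitely generated projectives of length $s$, hence in $\operatorname{thick}^s(R)$) places $M$ in $\operatorname{thick}^s(R)\star\operatorname{thick}^n(G)\subseteq\operatorname{thick}^{n+s}(G)$, using Remark~\ref{rmk:thick_categories_module_property}(1), fact (i), and $R\mid G$; hence $\operatorname{mod}R\subseteq\operatorname{thick}^{n+s}(G)$ and (ii) applies with $m=n+s$.

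The only step with genuine content is fact (ii): the cocycle/coboundary short exact sequence absorbs an entire bounded complex into two $\operatorname{thick}^1$-layers at once, which is exactly what removes any dependence on the cohomological amplitude of $X$ and yields the uniform factor of $2$; everything else is bookkeeping with the ball filtration. The points needing care are the precise syzygy indices in the direct version of part (1) and the hypothesis—built into the definition of a strong generator in $\operatorname{mod}R$—that $R$ is a direct summand of $G$, which is what lets part (1) be stated for $G$ rather than for $R\oplus G$.
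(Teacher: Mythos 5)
Your proof is correct and, at bottom, takes the paper's own route: the paper disposes of the lemma by citing Remark~\ref{rmk:thick_categories_module_property} together with \cite[Lemma 7.1]{Iyengar/Takahashi:2016}, and your facts (i) and (ii) are precisely a self-contained proof of those two ingredients, with the cocycle short exact sequence $0 \to Z^\bullet \to X \to X/Z^\bullet \to 0$ (both outer complexes having zero differential, hence finite direct sums of shifts of modules) being exactly the standard mechanism behind the cited lemma and the uniform factor of $2$. One small repair: in the induction step of your fact (i), the paper's definition of $\operatorname{thick}^k_{\operatorname{mod}R}(G)$ allows $M$ to be a direct summand of any one of the three terms of the short exact sequence, with the other two lying in $\operatorname{thick}^{k-1}_{\operatorname{mod}R}(G)$ and $\operatorname{thick}^1_{\operatorname{mod}R}(G)$, whereas you only treat the case where $M$ is a summand of the middle term; the remaining cases follow by rotating the triangle $X \to Y \to Z \to X[1]$ and using the closure under shifts and the subadditivity $\operatorname{thick}^a(G)\star\operatorname{thick}^b(G)\subseteq\operatorname{thick}^{a+b}(G)$ that you already invoke, so this is a one-line fix rather than a gap. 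Your reading of part (1) --- that $R$ should be a direct summand of $G$, as built into the definition of a strong generator in $\operatorname{mod}R$, or else the conclusion should be stated for $R\oplus G$ as in Remark~\ref{rmk:strong_generator_to_derived_category} --- is accurate and matches the paper's intent.
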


\begin{proof}
    This is \Cref{rmk:thick_categories_module_property} coupled with \cite[Lemma 7.1]{Iyengar/Takahashi:2016}.
\end{proof}

\section{Proofs}
\label{sec:proofs}

This section establishes our main results. We start by recalling a necessary ideal that is related to the existence of strong generators. See \cite[Section 2]{Iyengar/Takahashi:2016} for details.

\begin{definition}(\cite[Definition 2.1]{Iyengar/Takahashi:2016})
    Let $R$ be a Noetherian ring and $n\geq0$. Consider the following ideal of $R$:
    \begin{displaymath}
        \operatorname{ca}^n (R):= \operatorname{ann}_R\operatorname{Ext}^{\geq n}_R (\operatorname{mod}R,\operatorname{mod}R).
    \end{displaymath}
    The \textbf{cohomology annihilator (ideal)} of $R$ is given by $\operatorname{ca}(R) := \cup^\infty_{n=0}\operatorname{ca}^n (R)$.
\end{definition}

\begin{remark}\label{rmk:Rouquier_dim_cohomology_annihilator_nonzero}
    If $R$ is a Noetherian integral domain such that $D^b (\operatorname{mod}R)$ has finite Rouquier dimension $r$, then the $(r+1)$-th cohomology annihilator ideal $\operatorname{ca}^{r+1} (R)$ is nonzero. This is \cite[Theorem 5]{Elagin/Lunts:2018}.
\end{remark}

\begin{remark}\label{rmk:regular_element_syzygy_quotient}
    Let $R$ be a Noetherian ring, and choose an $R$-regular element $x$ of $R$. If $M$ is an object of $\operatorname{mod}R$, then there exists an $R$-module isomorphism for all $n\geq 0$:
    \begin{displaymath}
        \Omega_{R/xR}^n (\Omega^1_R (M)/ x\Omega^1_R (M)) \cong \Omega_R^{n+1} (M)/ x \Omega_R^{n+1} (M).
    \end{displaymath}
    This is \cite[Lemma 5.6]{Dao/Takahashi:2014}.
\end{remark}

\begin{remark}\label{rmk:ses_cohomology_annihilator}
    Let $R$ be a Noetherian ring, and $M$ be an object of $\operatorname{mod}R$. If $a$ is an element of $R$ which annihilates $\operatorname{Ext}_R^1(M,\Omega^1_R (M))$, then there exists a short exact sequence in $\operatorname{mod}R$:
    \begin{displaymath}
        0 \to (0:_M a) \to M \oplus \Omega^1_R (M) \to \Omega_R (M/a M) \to 0.
    \end{displaymath}
    This is \cite[Remark 2.12]{Iyengar/Takahashi:2016}.
\end{remark}

\begin{example}\label{ex:Dieterich_hypersurface_cohomology_annihilator}
    Let $k$ be a field, and consider the ring $R=k \llbracket  x_0,\ldots,x_n \rrbracket  $. If $f$ is in $R$, then the ideal $(\frac{\partial f}{\partial x_0}, \ldots, \frac{\partial f}{\partial x_n})$ is contained in $\operatorname{ca}^{d+1} \big(R/(f) \big)$. This is \cite[Proposition 18]{Dieterich/1987}.
\end{example}

\begin{example}\label{ex:BHST_completion_cohomology_annihilator}
    Let $n\geq 0$. If $(R,\mathfrak{m})$ is a Noetherian local ring of finite Krull dimension, then $\operatorname{ca}^n (\widehat{R})\cap R$ is contained in $\operatorname{ca}^n (R)$ where $\widehat{R}$ denotes the $\mathfrak{m}$-adic completion of $R$. This is \cite[Theorem 4.5.1]{BHST:2016}. 
\end{example}

\begin{remark}\label{rmk:strong_generator_to_derived_category}
    Let $R$ be a Noetherian ring. If there exists an object $G$ of $\operatorname{mod}R$ and $n,s\geq 0$ such that $\Omega^s_R (\operatorname{mod}R)$ is contained in $|G|_n$, then $\operatorname{mod}R = \operatorname{thick}_{\operatorname{mod}R}^{n+s} (R\oplus G)$ and $D^b (\operatorname{mod}R) = \operatorname{thick}^{2(n+s)} (R\oplus G)$. These are respectively \cite[Corollary 4.6]{Iyengar/Takahashi:2016} and \cite[Lemma 7.1]{Iyengar/Takahashi:2016}.
\end{remark}

\begin{lemma}\label{lem:ext_prop}
    If $E$ is in $ |G|_s$, then $|E|_n $ is contained in $|G|_{ns}$ for all $n\geq 1$.
\end{lemma}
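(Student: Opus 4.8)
The plan is to induct on $n$, the engine being the identity $|\mathcal{S}|_a\star|\mathcal{S}|_b=|\mathcal{S}|_{a+b}$ recorded above from \cite[\S 5]{Dao/Takahashi:2014}, together with the fact that every ball $|G|_t$ is closed under finite direct sums and under direct summands. Closure under direct summands is built into the definition of $|G|_t$. Closure under finite direct sums holds for $t\le 1$ since $|G|_0$ consists of zero objects and $|G|_1=\operatorname{add}(G)$; for larger $t$ one writes $|G|_t$ as an iterated $\star$-product of copies of $|G|_1$ via the displayed identity and observes that a $\star$-product of two subcategories closed under finite direct sums is again closed under finite direct sums, because in the defining short exact sequence $0\to X\to E\oplus E'\to Y\to 0$ of the operation $\star$ the sub- and quotient terms have fixed roles, so two such sequences may be added termwise. (Alternatively one may simply cite that these balls are closed under finite direct sums and summands.)

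With this in hand, proceed by induction on $n$. For $n=1$ we have $|E|_1=\operatorname{add}(E)$, and since $E\in|G|_s$ and $|G|_s$ is closed under finite direct sums and summands, $\operatorname{add}(E)\subseteq|G|_s=|G|_{1\cdot s}$. Suppose now $n\ge 2$ and $|E|_{n-1}\subseteq|G|_{(n-1)s}$. Let $M\in|E|_n$; by definition there is a module $W$ and a short exact sequence
\[
0\to Y\to M\oplus W\to X\to 0
\]
in which one of $X,Y$ lies in $|E|_{n-1}$ and the other in $|E|_1$. By the base case and the inductive hypothesis, one of $X,Y$ then lies in $|G|_{(n-1)s}$ and the other in $|G|_s$. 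Reading the above sequence against the definition of $\star$, with $Y$ the sub-object and $X$ the quotient, this places $M$ in $|G|_{(n-1)s}\star|G|_s$ or in $|G|_s\star|G|_{(n-1)s}$, and both of these equal $|G|_{(n-1)s+s}=|G|_{ns}$ by the $\star$-identity. Hence $M\in|G|_{ns}$, so $|E|_n\subseteq|G|_{ns}$, completing the induction.

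The only point requiring care is matching the two admissible orientations of the defining sequence of $|E|_n$ with the (oriented) operation $\star$; since $(n-1)s+s=s+(n-1)s$ the orientation is immaterial, so no genuine obstacle arises. The degenerate cases $s=0$ or $E\cong 0$ are trivial, as the relevant balls then collapse to the subcategory of zero objects.
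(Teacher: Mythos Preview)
Your proof is correct and follows essentially the same approach as the paper's: induction on $n$, with the base case $|E|_1=\operatorname{add}(E)\subseteq|G|_s$ established via closure of $|G|_s$ under sums and summands, and the inductive step handled by combining $|G|_{(n-1)s}$ and $|G|_s$ via the identity $|G|_a\star|G|_b=|G|_{a+b}$. You are in fact slightly more careful than the paper in explicitly treating both possible orientations of the defining short exact sequence and in invoking the $\star$-identity by name rather than using it implicitly.
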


\begin{proof}
    This is shown by induction on $n$. There exists a short exact sequence in $\operatorname{mod}R$:
    \begin{displaymath}
        0 \to A \to E \oplus M \to B \to 0
    \end{displaymath}
    where $A $ is in $ |G|_{s-1}$ and $B$ is in $ |G|_1$. By taking direct sums of this short exact sequence, it verifies $(E\oplus M)^{\oplus \alpha}$ in $ |G|_s$ for all $\alpha>0$. Hence, $|E|_1 $ is contained in $|G|_s$, and this establishes the base case. Assume the claim holds for all $1\leq j \leq n$. If $S$ is in $ |E|_n$, then there exists a short exact sequence in $\operatorname{mod}R$:
    \begin{displaymath}
        0 \to X \to S \oplus N \to Y \to 0
    \end{displaymath}
    where $X $ is in $ |E|_n$ and $Y$ is in $ |E|_1$. The induction step ensures $X $ is in $ |G|_{ns}$ and $Y$ is in $ |G|_s$, which implies $S$ is in $ |G|_{(n+1)s}$ as desired.
\end{proof}

\begin{remark}\label{rmk:ses_to_syzygy_ses}
    Let $R$ be a Noetherian ring and $n \geq 0$. If there exists a short exact sequence
    \begin{displaymath}
        0 \to L \to M \to N \to 0
    \end{displaymath}
    in $\operatorname{mod}R$, then there exists a short exact sequence in $\operatorname{mod}R$:
    \begin{displaymath}
        0 \to \Omega^n_R (L) \to \Omega^n_R(M) \to \Omega^n_R(N) \to 0.
    \end{displaymath}
    This is \cite[Remark 2.2]{Iyengar/Takahashi:2016}. Choose an ideal $I$ is contained in $ R$, and let $M$ be an object of $ \operatorname{mod}R/I$. There exists a short exact sequence:
    \begin{displaymath}
        0 \to E \to \Omega_R^1 (M) \to \Omega^1_{R/I}(M) \to 0
    \end{displaymath}
    where $E$ in $ \operatorname{add}(I)$. Furthermore, for each $n\geq 0$, one has that $\Omega_R^n (M)$ is in $|\Omega^n_{R/I} (M) \oplus \big( \oplus^{n-1}_{i=0}\Omega_R^i (I)\big)|_{n+1}$. This is \cite[Proposition 5.3]{Dao/Takahashi:2014}.
\end{remark}

\begin{proposition}\label{prop:strong_generation_via_minimal_primes}
    Let $R$ be a Noetherian ring. If $\operatorname{mod}R/\mathfrak{p}$ admits a strong generator for every prime ideal $\mathfrak{p}$ in $R$, then $\operatorname{mod}R$ admits a strong generator.
\end{proposition}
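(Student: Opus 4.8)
The plan is a Noetherian induction on ideals. Suppose the conclusion fails; then the set of ideals $I$ for which $\operatorname{mod}R/I$ has no strong generator is nonempty, so it has a maximal element, and replacing $R$ by the corresponding quotient we may assume that $\operatorname{mod}R/J$ has a strong generator for every nonzero ideal $J$, that $\operatorname{mod}R/\mathfrak p$ has a strong generator for every prime $\mathfrak p$ (this is inherited, since primes of a quotient of $R$ are primes of $R$), and yet $\operatorname{mod}R$ has none. If $(0)$ is prime we are done by hypothesis, so $R$ is not a domain. It then suffices to build one module $G$, having $R$ as a direct summand, and integers $s,n\geq0$ with $\Omega^s_R(\operatorname{mod}R)\subseteq|G|_n$: by definition such a $G$ is a strong generator for $\operatorname{mod}R$ (cf.\ Remark~\ref{rmk:strong_generator_to_derived_category}), contradicting the choice of $R$. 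The two mechanisms used are Remark~\ref{rmk:ses_to_syzygy_ses} (a short exact sequence stays exact after $\Omega^n_R$, and $\Omega^n_R(M)\in|\Omega^n_{R/I}(M)\oplus\bigoplus_{i=0}^{n-1}\Omega^i_R(I)|_{n+1}$ for $M\in\operatorname{mod}R/I$) and Lemma~\ref{lem:ext_prop} together with $|\mathcal S|_a\star|\mathcal S|_b=|\mathcal S|_{a+b}$.

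There are two cases. If $R$ is not reduced, its nilradical $\mathfrak n$ is nonzero with $\mathfrak n^m=0$, and for every $M\in\operatorname{mod}R$ the filtration $M\supseteq\mathfrak nM\supseteq\cdots\supseteq\mathfrak n^mM=0$ has length $m$ independent of $M$, with subquotients in $\operatorname{mod}R/\mathfrak n$; since $\mathfrak n\neq0$, the inductive hypothesis gives $H,s,n'$ with $\Omega^s_{R/\mathfrak n}(\operatorname{mod}R/\mathfrak n)\subseteq|H|_{n'}$. If $R$ is reduced but not a domain, its minimal primes $\mathfrak p_1,\dots,\mathfrak p_k$ are all nonzero with $\bigcap_j\mathfrak p_j=0$, so $R\hookrightarrow\prod_jR/\mathfrak p_j$; writing $N=\Omega^1_R(M)\subseteq R^b$, the submodules $N\cap\mathfrak p_jR^b$ give a filtration of $N$ of length $k$ independent of $M$ (as $\bigcap_j\mathfrak p_jR^b=0$), the $j$-th subquotient embedding in $(R/\mathfrak p_j)^b$ and so lying in $\operatorname{mod}R/\mathfrak p_j$, for which one fixes $H_j,s,n_j$ with $\Omega^{s-1}_{R/\mathfrak p_j}(\operatorname{mod}R/\mathfrak p_j)\subseteq|H_j|_{n_j}$. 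In either case one applies the appropriate power of $\Omega_R$ to the filtration, uses Remark~\ref{rmk:ses_to_syzygy_ses} to trade each $\Omega_R$ on a subquotient for $\Omega_{R/\mathfrak a}$ modulo a fixed finite list of modules $\Omega^i_R(\mathfrak a)$, absorbs the latter together with $H$ (resp.\ the $H_j$) into $G$, and invokes Lemma~\ref{lem:ext_prop} and $|\mathcal S|_a\star|\mathcal S|_b=|\mathcal S|_{a+b}$ repeatedly to obtain $\Omega^s_R(\operatorname{mod}R)\subseteq|G|_n$ with $n$ independent of $M$; this is the required contradiction.

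The step I expect to take the most care is the uniformity of this bookkeeping: one must check that the auxiliary modules $\Omega^i_R(\mathfrak n)$, resp.\ $\Omega^i_R(\mathfrak p_j)$, coming from Remark~\ref{rmk:ses_to_syzygy_ses} range over only a fixed finite set---which is why the number of syzygies $s$ is chosen and frozen before one looks at any particular $M$---that the filtrations above have length bounded independently of $M$ (via the nilpotency index, resp.\ via $\bigcap_j\mathfrak p_j=0$), and that the repeated applications of Lemma~\ref{lem:ext_prop} and of $|\mathcal S|_a\star|\mathcal S|_b=|\mathcal S|_{a+b}$ really do collapse into a single $|G|_n$ with $n$ not depending on $M$. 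A minor separate point is that $\Omega^1_R(M)$ is defined only up to free summands; since $R$ is a summand of $G$ this costs at most a bounded increase in $n$.
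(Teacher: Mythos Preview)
Your argument is correct, but the route differs from the paper's. The paper avoids Noetherian induction and the reduced/non-reduced case split entirely: it invokes the standard prime filtration of $R$ as a module over itself, i.e.\ a chain $(0)=I_0\subseteq I_1\subseteq\cdots\subseteq I_m=R$ with $I_{j+1}/I_j\cong R/\mathfrak p_j$ for primes $\mathfrak p_j$ (Matsumura, Theorem~6.4). For any $M$ one then filters by $I_jM$; each subquotient $I_{j+1}M/I_jM$ is an $R/\mathfrak p_j$-module because $\mathfrak p_jI_{j+1}\subseteq I_j$, and the hypothesis applies directly to every $R/\mathfrak p_j$. From there the mechanics are exactly the ones you describe: apply $\Omega_R^s$ with a uniformly chosen $s$, use Remark~\ref{rmk:ses_to_syzygy_ses} to pass from $\Omega_R^s$ on each subquotient to $\Omega_{R/\mathfrak p_j}^s$ at the cost of the fixed modules $\Omega_R^i(\mathfrak p_j)$, and collapse via Lemma~\ref{lem:ext_prop} and $|\,\cdot\,|_a\star|\,\cdot\,|_b=|\,\cdot\,|_{a+b}$.

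What your approach buys is that it never appeals to the prime-filtration theorem; instead it manufactures a bounded-length filtration by hand (powers of the nilradical, respectively successive intersections of the minimal primes inside a free module). What the paper's approach buys is brevity: one filtration handles all rings at once, no induction on ideals and no case split. Two small wording issues in your write-up, neither fatal: in the reduced case the ``submodules $N\cap\mathfrak p_jR^b$'' should really be the successive intersections $N\cap(\mathfrak p_1\cap\cdots\cap\mathfrak p_j)R^b$ to form a chain, and the subquotients are $R/\mathfrak p_j$-modules (which is what you need) but do not literally embed in $(R/\mathfrak p_j)^b$. Your care about uniformity---freezing $s$ before looking at $M$ so that the auxiliary $\Omega_R^i(\mathfrak a)$ range over a fixed finite set---is exactly the right point, and matches what the paper does.
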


\begin{proof}
    If $R$ is an integral domain, then there exists nothing to show, so assume it is not so. There exists a chain of ideals:
    \begin{displaymath}
        (0)=I_0 \subseteq I_1 \subseteq \cdots \subseteq I_m = R
    \end{displaymath}
    where $I_{j+1}/I_j \cong R/\mathfrak{p}_j$ for some $\mathfrak{p}_j$ prime ideal in $R$ (see \cite[Theorem 6.4]{Matsumura:1986}). If $R$ is not an integral domain, then $m$ is positive. The hypothesis ensures for each $1\leq j \leq m$ there exists $n_j, s_j\geq 0$ and $G_j $ in $ \operatorname{mod}R/\mathfrak{p}_j$ such that $\Omega^{s_j}_{R/\mathfrak{p}_j} (\operatorname{mod}R/\mathfrak{p}_j)$ is contained in $ |G_j|_{n_j}$. For any object $M$ in $ \operatorname{mod}R$ and $0 \leq j \leq m-1$, there exists a short exact sequence in $\operatorname{mod}R$:
    \begin{displaymath}
        0 \to I_j M \to I_{j+1} M \to I_{j+1}M/I_j M \to 0.
    \end{displaymath}
    If $s:=\max\{s_1,\ldots,s_m\}$, then $\Omega^s_{R/\mathfrak{p}_j} (\operatorname{mod}R/\mathfrak{p}_j)$ is contained in $|\Omega^{s-s_j}_{R/\mathfrak{p}_j} G_j|_{n_j}$ for each $1\leq j \leq m$. Moreover, if $n:=1+ \max\{ n_1,\ldots,n_m\}$, then $|\Omega^{s-s_j}_{R/\mathfrak{p}_j} G_j|_{n_j} $ is contained in $ |\Omega^{s-s_j}_{R/\mathfrak{p}_j} G_j|_n$. Observe there is a uniform choice of $n,s$ as in for each $1\leq j \leq m$, and so $\Omega^s_{R/\mathfrak{p}_j} (\operatorname{mod}R/\mathfrak{p}_j)$ is contained in $  |\Omega^{s-s_j}_{R/\mathfrak{p}_j} G_j|_n$. By abuse of notation, identify $G_j$ with $\Omega^{s-s_j}_{R/\mathfrak{p}_j} G_j$. For some choice of an $s$-th syzygy, there exists a short exact sequence via \Cref{rmk:ses_to_syzygy_ses} in $\operatorname{mod}R$:
    \begin{equation}
        \label{eq:syzygy_ses_module}
        0 \to \Omega_R^s (I_j M) \to \Omega_R^s ( I_{j+1} M) \to \Omega_R^s ( I_{j+1}M/I_j M) \to 0.
    \end{equation}
    By \Cref{rmk:ses_to_syzygy_ses} and \Cref{lem:ext_prop}, it can be verified for each $0\leq j \leq m-1$ that $\Omega^s_R (I_{j+1} M/I_j M)$ belongs to $|\Omega^s_{R/\mathfrak{p}_j}(I_{j+1} M/I_j M) \oplus \big( \bigoplus_{i=0}^{s-1} \Omega_R^i (\mathfrak{p}_j) \big)|_{s+1} $, which itself is contained in $|G_j \oplus \big( \bigoplus_{i=0}^{s-1} \Omega_R^i (\mathfrak{p}_j) \big)|_{n(s+1)}$. 
    
    Another application of \Cref{lem:ext_prop} with an inductive argument tells us $\Omega^s_R (I_{j+1} M)$ belongs to $|\bigoplus^j_{i=0} \big(G_i \oplus (\bigoplus_{l=0}^{s-1} \Omega_R^l (\mathfrak{p}_i)) \big)|_{ n(s+1)(j+1)}$. Indeed, since $I_1 M\in \operatorname{mod}R/\mathfrak{p}_1$, hence $\Omega^s_{R} (I_1 M)\in |G_1 \oplus \big( \bigoplus_{i=0}^{s-1} \Omega_R^i (\mathfrak{p}_1) \big)|_{n(s+1)}$. Similarly, $I_2M/I_1M\in \operatorname{mod} R/\mathfrak{p}_2$ implies $\Omega^s_R (I_2M/I_1M) \in |G_2 \oplus \big( \bigoplus_{i=0}^{s-1} \Omega_R^i (\mathfrak{p}_2) \big)|_{n(s+1)}$. For $i=2$, the short exact sequence in \Cref{eq:syzygy_ses_module} shows $\Omega^n_R (I_2 M)\in |\bigoplus^2_{i=0} \big(G_i \oplus (\bigoplus_{l=0}^{s-1} \Omega_R^l (\mathfrak{p}_i)) \big)_{2n(s+1)}$. If this process continues, then
    \begin{displaymath}
        \Omega^s_R M=\Omega^n_R(I_m M)\in |\bigoplus^j_{i=0} \big(G_i \oplus (\bigoplus_{l=0}^{s-1} \Omega_R^l (\mathfrak{p}_i)) \big)|_{mn(s+1)}.
    \end{displaymath}
    
    Then after working up the ladder of extensions, we see that $\Omega^s_R (\operatorname{mod}R)$ is contained in $|\bigoplus^m_{i=1} \big(G_i \oplus (\bigoplus_{l=0}^{s-1} \Omega_R^l (\mathfrak{p}_i)) \big)|_{ m n (s+1)}$. Therefore, it follows that
    \begin{displaymath}
        R\oplus \bigoplus^m_{i=1} \big(G_i \oplus (\bigoplus_{l=0}^{s-1} \Omega_R^l (\mathfrak{p}_i)) \big)
    \end{displaymath}
    is a strong generator for $\operatorname{mod}R$.
\end{proof} 

\begin{proof}[Proof of \Cref{thm:strong_generation}]
    The last claim regarding ideals of $R$ follows from \Cref{prop:strong_generation_via_minimal_primes} had we known the initial claim holds. Also, the claim about finite Krull dimension of $R$ would follow from \Cref{lem:reduced_Rouquier_bound} and the observations that each $R/\mathfrak p$ is an integral domain and $\dim(R)=\sup\{\dim(R/\mathfrak{p})\mid \mathfrak p\in \operatorname{Min}(R)\}$. So it suffices to prove that the three conditions are equivalent. On one hand, $(2)\implies (3)$, and on the other, \Cref{rmk:Rouquier_dim_cohomology_annihilator_nonzero} ensures $(3)\implies (1)$, so we show $(1)\implies (2)$. This will be done by Noetherian induction on $\operatorname{Spec}(R)$. The base case with the empty scheme is evident. Suppose $Y$ is a closed subscheme of $\operatorname{Spec}(R)$ such that all properly contained closed subschemes of $Y$ satisfy the desired claim that $(1)\implies (2)$. Note that $Y$ corresponds to some quotient ring $R/I$. It suffices to prove the claim in such a case, so without loss of generality, we replace this quotient $R/I$ by $R$ and assume the claim is true for all properly contained closed subschemes in $\operatorname{Spec}(R)$. Additionally, we may impose that $R$ is an integral domain because the claim is a about the quotient rings $R/\mathfrak{p}$ for $\mathfrak{p}\in \operatorname{Spec}(R)$. If $(1)$ holds, then there exists $r+1$ such that $\operatorname{ca}^{r+1}(R) \not= (0)$. Choose an element $a$ in $ \operatorname{ca}^{r+1}(R)$ that is a nonzero non-unit. There is a bijection between the prime ideals of $R/(a)$ and all prime ideals $\mathfrak{p}$ in $R$ such that $a$ in $ \mathfrak{p}$. For any such prime ideal $\mathfrak{p}$ containing $a$, one has that $\operatorname{ca}(R/\mathfrak{p})$ is nonzero, and so $\operatorname{mod}R/\mathfrak{p}$ a admits strong generator via the induction hypothesis as $\operatorname{Spec}(R/(a))$ is a properly contained closed subscheme of $\operatorname{Spec}(R)$ from our choice of $a$. An application of \Cref{prop:strong_generation_via_minimal_primes} ensures there exists an object $G$ in $ \operatorname{mod}R/(a)$ and $n,s\geq 0$ such that $\Omega_{R/(a)}^s (\operatorname{mod}R/(a)) $ is contained in $ |G|_n$. Fix an object $M$ in $ \operatorname{mod}R$. It is enough to show $\Omega_R^{r+s+1} (M) $ is in $ |\Omega^1_R (G)|_n$. Set $N=\Omega^{r+s+1}_R (M)$. If $R$ is an integral domain, then \Cref{rmk:regular_element_syzygy_quotient} ensures there exists an isomorphism:
    \begin{displaymath}
        N/aN = \Omega^{r+s}_{R/aR} (\Omega^1_R (M)/ a \Omega^1_R (M)).
    \end{displaymath}
    Now restriction of scalars ensures $N/aN $ is in $ |G|_n$, and so $\Omega_R^1 (N/aN) $ belongs to $ |\Omega_R^1 (G)|_n$. For each object $L$ in $ \operatorname{mod}R$ there exists an isomorphism
    \begin{displaymath}
        \operatorname{Ext}_R^1( N, L) \cong \operatorname{Ext}^{r+1}_R ( \Omega_R^{s+1} M,L),
    \end{displaymath}
    and so $\operatorname{Ext}_R^1( N, L)$ is annihilated by $a$. Note that $N$ is at least a first syzygy of $M$, and $a$ is a nonzero divisor on $N$ as $R$ is a domain. By \Cref{rmk:ses_cohomology_annihilator}, $N$ is a direct summand of $\Omega_R^1 (N/aN)$. This implies $\Omega_R^{r + s+1} (M) $ in $ |\Omega_R^1 (G)|_n$, and hence, establishes that $R\oplus \Omega^1_R (G)$ is a strong generator for $\operatorname{mod}R$.
\end{proof}

\begin{remark}
    It is worthwhile to note that there is a resemblance of the proof of \Cref{thm:strong_generation} and that of \cite[Theorem 5.1]{Iyengar/Takahashi:2016}. The interesting component is that this strategy is an honest adaptation to the context where the Rouquier dimension of an integral domain is finite, and \Cref{rmk:Rouquier_dim_cohomology_annihilator_nonzero} ensures nonvanishing of the cohomology annihilator ideal.
\end{remark}

\begin{corollary}\label{cor:quasi_excellent_strong_gen}
    For any Noetherian quasi-excellent ring $R$ of finite Krull dimension, $\operatorname{mod}R$ admits a strong generator. 
\end{corollary}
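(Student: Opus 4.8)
The plan is to deduce this immediately from Theorem~\ref{thm:strong_generation}, since that result already reduces the existence of a strong generator in $\operatorname{mod}R$ (and indeed in $\operatorname{mod}R/I$ for every ideal $I$, including $I=(0)$) to a condition checkable prime-by-prime. Concretely, I would verify condition (3): $D^b(\operatorname{mod}R/\mathfrak{p})$ admits a strong generator for every prime ideal $\mathfrak{p}$ of $R$. Once (3) holds, Theorem~\ref{thm:strong_generation} with $I=(0)$ gives that $\operatorname{mod}R$ admits a strong generator.

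\textbf{Key steps.} First I would fix a prime ideal $\mathfrak{p}$ of $R$ and observe that $R/\mathfrak{p}$ is again Noetherian quasi-excellent: quasi-excellence is inherited by quotient rings (it is a standard property of the class of quasi-excellent rings that homomorphic images of quasi-excellent rings are quasi-excellent). Second, $\dim R/\mathfrak{p}\le\dim R<\infty$, so $R/\mathfrak{p}$ is a Noetherian quasi-excellent ring of finite Krull dimension. Third, I would invoke the main theorem of \cite{Aoki:2021} to conclude that $D^b(\operatorname{mod}R/\mathfrak{p})$ admits a strong generator. Since $\mathfrak{p}$ was arbitrary, condition (3) of Theorem~\ref{thm:strong_generation} is satisfied; applying the theorem (with the ideal $I=(0)$) then yields that $\operatorname{mod}R$ admits a strong generator.

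\textbf{Main obstacle.} There is essentially no serious obstacle here: the content has been packaged into Theorem~\ref{thm:strong_generation} and into Aoki's theorem, both available to us. The only point requiring any care is the permanence statement that quasi-excellence descends to quotients $R/\mathfrak{p}$, which is routine, together with keeping track of the Krull dimension bound $\dim R/\mathfrak{p}\le\dim R$. Thus the proof will be short, amounting to assembling these two inputs.

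\begin{proof}
    Let $\mathfrak p$ be a prime ideal of $R$. Then $R/\mathfrak p$ is a homomorphic image of the Noetherian quasi-excellent ring $R$, hence is itself Noetherian quasi-excellent, and $\dim R/\mathfrak p \le \dim R < \infty$. By \cite[Main Theorem]{Aoki:2021}, $D^b(\operatorname{mod}R/\mathfrak p)$ admits a strong generator. As $\mathfrak p$ was arbitrary, condition (3) of Theorem~\ref{thm:strong_generation} holds, and therefore $\operatorname{mod}R/I$ admits a strong generator for every ideal $I$ of $R$; taking $I=(0)$ shows $\operatorname{mod}R$ admits a strong generator.
\end{proof}
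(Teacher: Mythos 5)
Your proof is correct and follows essentially the same route as the paper: pass quasi-excellence to the quotients $R/\mathfrak p$, invoke Aoki's main theorem to get strong generation of each $D^b(\operatorname{mod}R/\mathfrak p)$, and then apply Theorem~\ref{thm:strong_generation} (via its final claim, with $I=(0)$) to conclude that $\operatorname{mod}R$ admits a strong generator.
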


\begin{proof}
    If $R$ is quasi-excellent, then any essentially $R$-algebra of finite type is as well. By \cite[Main Theorem]{Aoki:2021}, it was shown that any Noetherian quasi-excellent of finite Krull dimension has finite Rouquier dimension, and so appealing to \Cref{thm:strong_generation} furnishes the proof of the first claim.
\end{proof}

\begin{proposition}\label{prop:cmiso}
    As a consequence of \Cref{cor:quasi_excellent_strong_gen}, we are abe to deduce the following: Let $R$ be a local  ring whose completion has isolated singularity. Then, $\operatorname{mod}R$ admits a strong generator. If, moreover, $R$ is Cohen--Macaulay, then there exists $H\in \operatorname{mod } R$ such that $\operatorname{CM}(R)=|H|_c$ for some $c>0$. 
\end{proposition}

\begin{proof} 
    Indeed, since the completion $\widehat R$ is local excellent, so by \Cref{cor:quasi_excellent_strong_gen}, there exists $C\in \operatorname{mod } \widehat R$ and integers $n,s>0$ such that $\Omega^n_{\widehat R}(\operatorname{ mod } \widehat R)\subseteq |C|_s$. Then, $\Omega^{d+n}_{\widehat R}(\operatorname{mod} \widehat R)\subseteq |\Omega^d_{\widehat R} C|_s$, where $d:=\dim(R)$. Since $\widehat R$ has isolated singularity, hence remembering every localization of $\widehat R$ has depth at most $d$, Auslander-Buchsbaum formula implies $\Omega^d_{\widehat R} C$ is locally free on punctured spectrum of $\widehat R$. Now by \cite[Corollary 4.4]{BHST:2016} we get that there exists $G\in \operatorname{mod} R$, locally free on punctured spectrum, such that $\Omega^{d+n}_{\widehat R}(\operatorname{mod} \widehat R)\subseteq |\widehat G|_s$. Since the subcategory of modules, locally free on punctured spectrum, is closed under direct summands and extensions, hence an argument similar to the proof of \cite[Theorem 5.11(2)]{Dao/Takahashi:2014} shows that for every $m>0$ and $N\in |\widehat G|_m$, there exists $M\in |G|_m$ such that $N$ is a direct summand of $\widehat M$. Now let $X\in \Omega^{d+n}_{ R}(\operatorname{mod}  R)$, then $\widehat X\in \Omega^{d+n}_{\widehat R}(\operatorname{mod} \widehat R)\subseteq |\widehat G|_s$. Hence, there exists $M\in |G|_s$ such that $\widehat X$ is a direct summand of $\widehat M$. By \cite[Lemma 5.7]{Aihara/Takahashi:2015}, $X\in \operatorname{add}_R(M)\subseteq |G|_s$. Since $X\in \Omega^{d+n}_{ R}(\operatorname{mod}  R)$ was arbitrary, we conclude $\Omega^{d+n}_{ R}(\operatorname{mod}  R)\subseteq |G|_s$, showing $\operatorname{mod} R$ admits a strong generator.

    Now assume $R$ is also Cohen--Macaulay. Then, as already argued, there exists $G\in \operatorname{mod } \widehat R$ and integers $n,s>0$ such that $\Omega^n_{\widehat R}(\operatorname{ mod } \widehat R)\subseteq |G|_s$. An argument similar to the proofs of \cite[Corollary 5.9, Proposition 5.10]{Dao/Takahashi:2014} shows that $\operatorname{CM}(\widehat R)=|\Omega^d_{\widehat R}(G)\oplus W|_{s(d+n+1)}$, where $d:=\dim R$,  $W=\bigoplus_{j=0}^{d+n-1} \Omega^j_{\widehat R}\omega^{\oplus b_j}$, where $\omega$ is the canonical module of $\widehat R$. Since $\widehat R$ has isolated singularity,  the proof of the claim in \cite[Theorem 5.11(2)]{Dao/Takahashi:2014} (and the argument around it) shows that $\operatorname{CM}(R)=|H|_{s(d+n+1)}$ for some $H\in \operatorname{mod }(R)$.  
\end{proof}

Next, we work towards a proof for \Cref{cor:f_finite_strong_gen_module_cat}. Recall there is an ascending chain of additive subcategories:
\begin{displaymath}
    \operatorname{thick}_{\operatorname{mod}R}^0 (G) \subseteq \operatorname{thick}_{\operatorname{mod}R}^1 (G) \subseteq \operatorname{thick}_{\operatorname{mod}R}^2(G) \subseteq \cdots \subseteq \operatorname{thick}_{\operatorname{mod}R}(G).
\end{displaymath}
It is not clear whether or not the following equality holds:
\begin{displaymath}
    \bigcup^\infty_{i=0} \operatorname{thick}_{\operatorname{mod}R}^i (G) = \operatorname{thick}_{\operatorname{mod}R} (G).
\end{displaymath}
To circumvent this, we will need to introduce a new filtration for thick subcategories in module categories.

\begin{definition}
    Let $R$ be a Noetherian ring. Suppose $G$ is an object in $ \operatorname{mod}R$. Consider the following subcategories:
    \begin{enumerate}
        \item $\operatorname{th}_{\operatorname{mod}R}^1 (G):=\operatorname{add} (G)$ 
        \item $\operatorname{th}_{\operatorname{mod}R}^n (G)$ is the full subcategory of $\operatorname{mod} R$ consisting of modules $M$ such that there exists an exact sequence in $\operatorname{mod}R$:
        \begin{displaymath}
            0 \to A \to B \to C \to 0
        \end{displaymath} 
        where $D_1$ and $D_2$ belong to $\operatorname{th}_{\operatorname{mod}R}^{n-1} (G)$ and $M$ is a direct summand of $D_3$ for $\{D_1,D_2,D_3\}=\{A,B,C\}$.
    \end{enumerate}
\end{definition}

\begin{lemma}\label{lem:large_thickenings_filtration}
    If $G$ in $ \operatorname{mod}R$, then $\bigcup_{n=0}^\infty \operatorname{th}_{\operatorname{mod}R}^n (G) = \operatorname{thick}_{\operatorname{mod}R} (G)$.
\end{lemma}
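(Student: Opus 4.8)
The plan is to show the two inclusions $\bigcup_{n=0}^\infty \operatorname{th}_{\operatorname{mod}R}^n(G) \subseteq \operatorname{thick}_{\operatorname{mod}R}(G)$ and $\operatorname{thick}_{\operatorname{mod}R}(G) \subseteq \bigcup_{n=0}^\infty \operatorname{th}_{\operatorname{mod}R}^n(G)$ separately. The first inclusion is the routine direction: by induction on $n$ one checks that each $\operatorname{th}_{\operatorname{mod}R}^n(G)$ is contained in the thick subcategory generated by $G$. Indeed $\operatorname{th}_{\operatorname{mod}R}^1(G) = \operatorname{add}(G) \subseteq \operatorname{thick}_{\operatorname{mod}R}(G)$ since a thick subcategory is closed under finite direct sums and direct summands; and if $M$ is a direct summand of one of the three terms of a short exact sequence whose other two terms lie in $\operatorname{th}_{\operatorname{mod}R}^{n-1}(G) \subseteq \operatorname{thick}_{\operatorname{mod}R}(G)$, then by the two-out-of-three property the third term lies in $\operatorname{thick}_{\operatorname{mod}R}(G)$, hence so does its summand $M$.

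For the reverse inclusion, I would verify that $\mathcal{T} := \bigcup_{n=0}^\infty \operatorname{th}_{\operatorname{mod}R}^n(G)$ is itself a thick subcategory of $\operatorname{mod}R$ containing $G$; since $\operatorname{thick}_{\operatorname{mod}R}(G)$ is by definition the smallest such, this gives $\operatorname{thick}_{\operatorname{mod}R}(G) \subseteq \mathcal{T}$. It clearly contains $G$ (take $n=1$). Closure under direct summands follows from the fact that the filtration is ascending ($\operatorname{th}^n \subseteq \operatorname{th}^{n+1}$, obtained by using the split short exact sequence $0 \to D \to D \to 0 \to 0$ together with closure of each layer under summands, which is built into the definition via the choice $\{D_1,D_2,D_3\}=\{A,B,C\}$), so that a direct summand of an object in $\operatorname{th}^n(G)$ already lies in $\operatorname{th}^{n+1}(G)$. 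The substantive point is closure under the two-out-of-three property for short exact sequences: given $0 \to A \to B \to C \to 0$ with two of $A,B,C$ in $\mathcal{T}$, say in $\operatorname{th}^a(G)$ and $\operatorname{th}^b(G)$, one must place the third in some $\operatorname{th}^N(G)$. Setting $m = \max\{a,b\}$ and using the ascending property to put both known terms in $\operatorname{th}^m(G)$, the definition of $\operatorname{th}^{m+1}(G)$ then immediately exhibits the third term (as a direct summand of itself) in $\operatorname{th}^{m+1}(G) \subseteq \mathcal{T}$.

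The main obstacle is the bookkeeping needed to confirm that the filtration $\operatorname{th}_{\operatorname{mod}R}^\bullet(G)$ is genuinely ascending and that each layer is closed under direct summands — these are exactly the two properties that the usual $\operatorname{thick}_{\operatorname{mod}R}^\bullet$ filtration lacks (or at least is not obviously known to have), and they are what make this new filtration exhaust the thick closure. Once those two structural facts are in hand, the two-out-of-three closure of the union is essentially immediate from the defining short exact sequence in $\operatorname{th}^{n}(G)$, and both inclusions follow with no further difficulty. I would also remark that, in contrast to $\operatorname{thick}_{\operatorname{mod}R}^n(G)$, the price paid is that $\operatorname{th}^n(G)$ grows roughly twice as fast, which is harmless for the intended application to constructing strong generators.
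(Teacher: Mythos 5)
Your proposal is correct and follows essentially the same route as the paper's (much terser) proof: the inclusion $\operatorname{th}_{\operatorname{mod}R}^n(G)\subseteq\operatorname{thick}_{\operatorname{mod}R}(G)$ by induction on $n$ using the two-out-of-three property, and the reverse inclusion by checking that the union $\bigcup_n\operatorname{th}_{\operatorname{mod}R}^n(G)$ is a thick subcategory containing $G$. Your additional verifications (each layer closed under summands, the filtration ascending via split exact sequences, and two-out-of-three for the union by passing to $\operatorname{th}^{\max\{a,b\}+1}$) are exactly the details the paper leaves to the reader.
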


\begin{proof}
    First, one shows that $\operatorname{th}_{\operatorname{mod}R}^n (G) $ is contained in $ \operatorname{thick}_{\operatorname{mod}R} (G)$ for each $n\geq 0$. This may be done by induction on $n$. To show the reverse inclusion, it suffices to check that $\bigcup_{n=0}^\infty \operatorname{th}_{\operatorname{mod}R}^n (G)$ is thick subcategory in $\operatorname{mod}R$.
\end{proof}

\begin{lemma}\label{lem:large_thickening_to_extension_decompositions}
    Let $G,M$ in $ \operatorname{mod}R$, and choose $n\geq 0$. If $M$ belongs to $\operatorname{th}_{\operatorname{mod}R}^{n+1}(G)$, then $\Omega_R^n (M)$ belongs to $|C|_m$ where $C=R\oplus\bigoplus_{i=0}^{2n}\Omega_R^i (G)$ and $m=2^n$.
\end{lemma}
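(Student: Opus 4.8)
The plan is to induct on $n$, tracking both the number of syzygies and the exponent $m$ in $|{-}|_m$ simultaneously. For $n = 0$ the statement reads: if $M \in \operatorname{th}^1_{\operatorname{mod}R}(G) = \operatorname{add}(G)$, then $M = \Omega^0_R(M)$ lies in $|C|_1$ where $C = R \oplus G$; this is immediate since $\operatorname{add}(G) \subseteq \operatorname{add}(C) = |C|_1$. For the inductive step, suppose the claim holds for $n$ and take $M \in \operatorname{th}^{n+2}_{\operatorname{mod}R}(G)$. By definition $M$ is a direct summand of one of the three terms $D_3$ in a short exact sequence $0 \to A \to B \to C' \to 0$ with the other two terms $D_1, D_2$ in $\operatorname{th}^{n+1}_{\operatorname{mod}R}(G)$.

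First I would apply Remark~\ref{rmk:ses_to_syzygy_ses} to push this short exact sequence down $n{+}1$ syzygies, obtaining $0 \to \Omega^{n+1}_R(A) \to \Omega^{n+1}_R(B) \to \Omega^{n+1}_R(C') \to 0$. The key point is that for each of the two terms $D_i$ in $\operatorname{th}^{n+1}_{\operatorname{mod}R}(G)$, the induction hypothesis gives $\Omega^n_R(D_i) \in |C|_{2^n}$ with $C = R \oplus \bigoplus_{i=0}^{2n}\Omega^i_R(G)$; taking one further syzygy, $\Omega^{n+1}_R(D_i) = \Omega^1_R(\Omega^n_R(D_i))$ lies in $|\Omega^1_R(C)|_{2^n}$, and since $\Omega^1_R(C) = \Omega^1_R(R) \oplus \bigoplus_{i=0}^{2n}\Omega^{i+1}_R(G)$ is (up to adding the zero module for $\Omega^1_R(R)$ and adjusting summands) a summand-wise piece of $C' := R \oplus \bigoplus_{i=0}^{2(n+1)}\Omega^i_R(G)$, we get $\Omega^{n+1}_R(D_i) \in |C'|_{2^n}$ for the two relevant $D_i$. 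Then $\Omega^{n+1}_R(D_3)$ is the remaining term of the short exact sequence $0 \to \Omega^{n+1}_R(A) \to \Omega^{n+1}_R(B) \to \Omega^{n+1}_R(C') \to 0$, so it fits into a sequence with the other two terms in $|C'|_{2^n}$; by the $\star$-operation $|C'|_{2^n} \star |C'|_{2^n} = |C'|_{2^{n+1}}$ (and the fact that whichever of $A,B,C'$ is $D_3$, the exact sequence realizes $\Omega^{n+1}_R(D_3)$ as an extension term), we conclude $\Omega^{n+1}_R(D_3) \in |C'|_{2^{n+1}}$. Finally, since $M$ is a direct summand of $D_3$, Remark~\ref{rmk:ses_to_syzygy_ses} (applied to the split sequence $0 \to M \to D_3 \to D_3/M \to 0$) shows $\Omega^{n+1}_R(M)$ is a direct summand of $\Omega^{n+1}_R(D_3)$, hence also in $|C'|_{2^{n+1}}$, completing the induction.

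The main obstacle I anticipate is bookkeeping the syzygy-of-a-direct-summand and syzygy-of-a-short-exact-sequence issues cleanly: syzygies are only well defined up to stable isomorphism and projective summands, so one must be careful that $\Omega^{n+1}_R(D_3)$ genuinely sits in a short exact sequence with $\Omega^{n+1}_R$ of the other two terms — this is exactly what Remark~\ref{rmk:ses_to_syzygy_ses} (i.e.\ \cite[Remark 2.2]{Iyengar/Takahashi:2016}) guarantees, and the extra free summands are absorbed by the $R$ summand built into $C$. A secondary point is matching indices: after one extra syzygy the shifts $\Omega^i_R(G)$ run up to $2n{+}1$, comfortably within the range $0,\dots,2(n{+}1)$ allotted by $C'$, and $\Omega^1_R(R)$ is stably zero so contributes nothing new. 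Once these are handled, the doubling $m = 2^n \mapsto 2^{n+1}$ falls out directly from the additivity law $|C'|_a \star |C'|_b = |C'|_{a+b}$ recorded in the excerpt.
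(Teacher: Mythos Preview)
There is a genuine gap in the inductive step. The categories $|C'|_m$ and the $\star$-operation capture only the \emph{middle} term of a short exact sequence: by definition $E\in|\mathcal S|_a\star|\mathcal S|_b$ means there is a sequence $0\to X\to E\oplus E'\to Y\to 0$ with $X\in|\mathcal S|_a$ and $Y\in|\mathcal S|_b$. So your sentence ``whichever of $A,B,C'$ is $D_3$, the exact sequence realizes $\Omega^{n+1}_R(D_3)$ as an extension term'' is false when $D_3$ is the kernel or the cokernel: in those cases $\Omega^{n+1}_R(D_3)$ sits at an end of the sequence $0\to\Omega^{n+1}_R(A)\to\Omega^{n+1}_R(B)\to\Omega^{n+1}_R(C')\to 0$, and the identity $|C'|_{2^n}\star|C'|_{2^n}=|C'|_{2^{n+1}}$ says nothing about it. Nor is it true that kernels or cokernels of maps between objects of $|\mathcal S|_a$ lie in $|\mathcal S|_{2a}$; e.g.\ from $0\to I\to R\to R/I\to 0$ with $R\in|R|_1$ one cannot conclude $R/I\in|R|_2$ when $R/I$ has infinite projective dimension.

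The paper repairs exactly this point by a three-way case split on the position of $D_3$ in $\{A,B,C\}$, together with the rotation trick: from $0\to X\to Y\to Z\to 0$ one gets $0\to\Omega_R Z\to X\oplus P\to Y\to 0$ up to projective summands, and iterating once or twice moves the desired syzygy of $D_3$ into the middle at the cost of an extra syzygy on one of the other two terms. For this to land on $\Omega^{n+1}_R(D_3)$ (in your indexing) one must keep the sequence at the $\Omega^n_R$ level, where the induction hypothesis applies directly to $D_1,D_2$, and only then rotate; if instead you push the whole sequence to $\Omega^{n+1}_R$ first as you do, then in the cokernel case two rotations deliver $\Omega^{n+2}_R(D_3)$ rather than $\Omega^{n+1}_R(D_3)$. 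These rotations are also the reason the generator must include $\Omega^i_R(G)$ for $i$ up to $2n$ rather than roughly $n$: the up-to-two extra syzygy levels consumed in the worst case account for the factor of~$2$ that your index bookkeeping leaves room for but your argument never actually uses.
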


\begin{proof}
    We use induction on $n$. The assertion is clear if $n=0$. Let $n>0$.
    There is an exact sequence
    \begin{displaymath}
        0 \to A \to B \to C \to 0
    \end{displaymath}
    such that $D_1$ and $D_2$ belong to $\operatorname{th}_{\operatorname{mod}R}^n (G)$ and $M$ is a direct summand of $D_3$
    where $\{D_1,D_2,D_3\}=\{A,B,C\}$. The induction hypothesis implies that
    $\Omega_R^{n-1}(D_1)$ and $\Omega_R^{n-1}(D_2)$ are in $|E|_m$, where
    $E=R\oplus\bigoplus_{i=0}^{2n-2}\Omega_R^i (G)$ and $m=2^{n-1}$.

    (1) Suppose $(D_1,D_2,D_3)=(C,A,B)$. Then there is an exact sequence
    \begin{displaymath}
        0 \to \Omega_R^{n-1}(D_2) \to \Omega_R^{n-1}(D_3) \to \Omega_R^{n-1}(D_1) \to 0
    \end{displaymath}
    up to projective summands. Hence, $\Omega_R^{n-1}(M)$ belongs to $|E|_{2m}$.
    Therefore, $\Omega_R^n (M)$ belongs to $|\Omega_R (E)|_{2m}$.

    (2) Suppose $(D_1,D_2,D_3)=(B,C,A)$. Then there is an exact sequence
    \begin{displaymath}
        0 \to \Omega_R^{n-1}(D_3) \to \Omega_R^{n-1}(D_1) \to \Omega_R^{n-1}(D_2) \to 0
    \end{displaymath}
    up to projective summands, which induces an exact sequence
    \begin{displaymath}
        0 \to \Omega_R^n (D_2) \to \Omega_R^{n-1}(D_3) \to \Omega_R^{n-1}(D_1) \to 0
    \end{displaymath}
    up to projective summands. As $\Omega_R^{n-1} (D_2)$ is in $|E|_m$, we have
    $\Omega^n_R (D_2)$ is in $|\Omega_R^1 (E)|_m$. Hence, $\Omega_R^{n-1}(M)$ belongs to $|E\oplus\Omega_R^1 (E)|_{2m}$. Therefore, $\Omega^n_R (M)$ belongs to $|\Omega_R^1 (E)\oplus\Omega_R^2(E)|_{2m}$.

    (3) Suppose $(D_1,D_2,D_3)=(A,B,C)$. Then there is an exact sequence
    \begin{displaymath}
        0 \to \Omega_R^{n-1} (D_1) \to \Omega_R^{n-1} (D_2) \to \Omega_R^{n-1} (D_3) \to 0
    \end{displaymath}
    up to projective summands, which induces an exact sequence
    \begin{displaymath}
        0 \to \Omega_R^n (D_3) \to \Omega_R^{n-1} (D_1) \to \Omega_R^{n-1} (D_2) \to 0
    \end{displaymath}
    up to projective summands. However, this further induces an exact sequence
    \begin{displaymath}
        0 \to \Omega_R^n (D_2) \to \Omega_R^n (D_3) \to \Omega_R^{n-1} (D_1) \to 0
    \end{displaymath}
    up to projective summands. As $\Omega_R^{n-1} (D_2)$ is in $|E|_m$, we have
    $\Omega_R^n (D_2)$ is in $|\Omega_R^1 (E)|_m$. Hence, $\Omega_R^n (M)$ belongs to $|E\oplus\Omega_R (E)|_{2m}$.

    Consequently, $\Omega_R^n (M)$ belongs to $|E\oplus\Omega_R^1
    (E)\oplus\Omega_R^2(E)|_{2m}=|F|_{2m}$, where $F=R\oplus\bigoplus_{i=0}^{2n}\Omega_R^i (G)$ and $2m=2^n$.
\end{proof}

\begin{proposition}\label{prop:large_thickening_to_syzygy}
    Let $G,M$ in $ \operatorname{mod}R$.
    \begin{enumerate}
        \item For $n=1,2$ one has $\operatorname{thick}_{\operatorname{mod}R}^n (G)=\operatorname{th}_{\operatorname{mod}R}^n (G)$, and $\operatorname{thick}_{\operatorname{mod}R}^n (G) $ is contained in $ \operatorname{th}_{\operatorname{mod}R}^n (G)$ when $n>2$.
        \item Let $n\geq 0$ be an integer. Put $m=2^n$ and $C=R\oplus(\bigoplus_{i=0}^{2n}\Omega_R^i (G))$. Consider the following three conditions.
        \begin{enumerate}
            \item The module $M$ belongs to $\operatorname{th}_{\operatorname{mod}R}^{n+1} (G)$.
            \item The module $\Omega_R^n (M)$ belongs to $|C|_m$.
            \item The module $M$ belongs to $\operatorname{thick}^{m+n}_{\operatorname{mod}R}(R\oplus C)$.
        \end{enumerate}
        Then the implications $(a) \implies (b) \implies (c)$ hold.
    \end{enumerate}
\end{proposition}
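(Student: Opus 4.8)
The plan is to handle the two parts essentially separately, treating (2) as the technical core and deducing (1) as a low-dimensional bookkeeping check. For part (1), I would first observe that $\operatorname{th}^1$ and $\operatorname{thick}^1$ agree by definition, since both equal $\operatorname{add}(G)$. For $n=2$, unwinding the definitions shows that $\operatorname{th}^2_{\operatorname{mod}R}(G)$ consists of direct summands of a module $D_3$ appearing in a short exact sequence with the other two terms in $\operatorname{add}(G)$; comparing with the definition of $\operatorname{thick}^2_{\operatorname{mod}R}(G)$ (which allows a short exact sequence with one term in $\operatorname{thick}^1(G)=\operatorname{add}(G)$ and the other in $\operatorname{thick}^1(G)$), one sees these coincide — the only subtlety is that in $\operatorname{th}^2$ the distinguished summand $D_3$ may sit in any of the three positions $A$, $B$, or $C$, whereas the thick filtration as defined puts the ``$n-1$''-term in a fixed position; but for $n=2$ both building terms live in $\operatorname{add}(G)$, so the position is immaterial, giving equality. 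For $n>2$, the inclusion $\operatorname{thick}^n_{\operatorname{mod}R}(G)\subseteq \operatorname{th}^n_{\operatorname{mod}R}(G)$ follows by induction on $n$: given $M$ a summand of $E$ with $A\in\operatorname{thick}^{n-1}(G)\subseteq\operatorname{th}^{n-1}(G)$ and $B\in\operatorname{thick}^1(G)=\operatorname{add}(G)\subseteq\operatorname{th}^{n-1}(G)$ sitting in a triangle $A\to E\to B$, this is exactly the data required to place $E$, hence $M$, in $\operatorname{th}^n(G)$.

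For part (2), the implication $(a)\implies(b)$ is precisely the content of Lemma~\ref{lem:large_thickening_to_extension_decompositions}, applied verbatim: if $M\in\operatorname{th}^{n+1}_{\operatorname{mod}R}(G)$ then $\Omega^n_R(M)\in|C|_m$ with $C=R\oplus\bigoplus_{i=0}^{2n}\Omega^i_R(G)$ and $m=2^n$. So nothing new is needed there. The remaining implication $(b)\implies(c)$ is the one requiring an argument, and the plan is to invoke Remark~\ref{rmk:thick_categories_module_property}(2): if $\Omega^n_R(\{M\})\subseteq|C|_m$ — in particular if the single module $\Omega^n_R(M)$ lies in $|C|_m$ — then $M\in\operatorname{thick}^{n+m}_{\operatorname{mod}R}(R\oplus C)$. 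One must be slightly careful that Remark~\ref{rmk:thick_categories_module_property}(2) is stated for $\Omega^s_R(\operatorname{mod}R)$, i.e.\ for all modules at once, but the proof (which is \cite[Corollary 4.6]{Iyengar/Takahashi:2016}) works module-by-module: the statement ``$\Omega^n_R(M)\in|C|_m$ implies $M\in\operatorname{thick}^{n+m}_{\operatorname{mod}R}(R\oplus C)$'' holds for an individual $M$, because building $M$ from $\Omega^n_R(M)$ costs $n$ steps (each syzygy short exact sequence $0\to\Omega^{i+1}\to P_i\to\Omega^i\to0$ with $P_i$ free, so in $\operatorname{add}(R)$) and placing $\Omega^n_R(M)$ itself costs the $m$ steps recorded by $|C|_m\subseteq\operatorname{thick}^m_{\operatorname{mod}R}(C)$ from Remark~\ref{rmk:thick_categories_module_property}(1).

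I expect the main obstacle to be the careful accounting in part (1) for $n>2$, specifically making sure the asymmetry between the ``symmetric'' filtration $\operatorname{th}^n$ (where the distinguished summand can occupy any vertex of the short exact sequence) and the ``one-sided'' filtration $\operatorname{thick}^n$ does not break the claimed inclusion — and dually, why one should not expect equality for $n>2$ (the $\operatorname{th}$ filtration is genuinely coarser because it is symmetric, which is exactly why Lemma~\ref{lem:large_thickenings_filtration} needs it). For part (2), there is no real obstacle beyond correctly citing the individual-module form of \cite[Corollary 4.6]{Iyengar/Takahashi:2016}; the constants $m=2^n$ and the shape of $C$ are forced by Lemma~\ref{lem:large_thickening_to_extension_decompositions} and need only be transcribed.
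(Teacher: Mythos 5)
Your proposal is correct and follows essentially the same route as the paper: part (1) is read off from the definitions, (a)$\implies$(b) is exactly Lemma~\ref{lem:large_thickening_to_extension_decompositions}, and (b)$\implies$(c) is established in the paper by the same syzygy-climbing you describe, using the sequences $0\to\Omega_R^{i+1}(M)\to P\to\Omega_R^{i}(M)\to 0$ with $P\in\operatorname{add}(R\oplus C)$ to gain one $\operatorname{thick}$-level per step after placing $\Omega_R^n(M)\in|C|_m\subseteq\operatorname{thick}^m_{\operatorname{mod}R}(R\oplus C)$. One small aside: the reason $\operatorname{th}^n_{\operatorname{mod}R}(G)$ can strictly exceed $\operatorname{thick}^n_{\operatorname{mod}R}(G)$ for $n>2$ is not positional symmetry (the paper's $\operatorname{thick}^n$ is also symmetric in which term is the distinguished one) but that $\operatorname{th}^n$ allows both undistinguished terms to lie in $\operatorname{th}^{n-1}$, whereas $\operatorname{thick}^n$ forces one of them into $\operatorname{thick}^1$; this does not affect the validity of your inclusion argument.
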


\begin{proof}
    (1) The claim is immediate from the definitions of $\operatorname{thick}_{\operatorname{mod}R}^n (G)$ and $\operatorname{th}_{\operatorname{mod}R}^n (G)$.

    (2) \Cref{lem:large_thickening_to_extension_decompositions} ensures $(a)$ implies $(b)$. Let us show that $(b)$ implies $(c)$. Assume $\Omega_R^n (M)$ belongs to $|C|_m$. There is an exact sequence 
    \begin{displaymath}
    0 \to \Omega_R^n (M) \to P \to \Omega_R^{n-1} (M) \to 0.
    \end{displaymath}
    where $P$ is a projective $R$-module. We have that $\Omega_R^n (M)$ is in $\operatorname{thick}_{\operatorname{mod}R}^m(R\oplus C)$ and $R^{\oplus r}$ is in $\operatorname{add}(R\oplus C)$. Hence, $\Omega_R^{n-1} (M)$ is in $\operatorname{thick}_{\operatorname{mod}R}^{m+1}(R\oplus C)$. It may be verified that $M$ in $ \operatorname{thick}_{\operatorname{mod}R}^{m+n}(R\oplus C)$ by induction on $n$.
\end{proof}

\begin{corollary}
    The following are equivalent:
    \begin{enumerate}
        \item $\operatorname{mod} R=\operatorname{th}_{\operatorname{mod}R}^n(G)$ for some $G$ in $ \operatorname{mod}R$ and $n\geq 0$;
        \item $\operatorname{mod} R=\operatorname{thick}_{\operatorname{mod}R}^m (C)$ for some $C$ in $ \operatorname{mod}R$ and $m\geq 0$.
    \end{enumerate}
\end{corollary}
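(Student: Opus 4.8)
The plan is to deduce both implications directly from Proposition~\ref{prop:large_thickening_to_syzygy}; the point of having introduced the $\operatorname{th}^n_{\operatorname{mod}R}(-)$ filtration was precisely to make this comparison available, so no genuinely new argument is needed beyond keeping track of indices and discarding a degenerate case.

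For $(2)\Rightarrow(1)$: suppose $\operatorname{mod}R=\operatorname{thick}^m_{\operatorname{mod}R}(C)$. We may assume $m\geq 1$, since $m=0$ forces $R=0$ and the statement is trivial there. If $m\leq 2$, part (1) of Proposition~\ref{prop:large_thickening_to_syzygy} gives $\operatorname{thick}^m_{\operatorname{mod}R}(C)=\operatorname{th}^m_{\operatorname{mod}R}(C)$; if $m>2$ it gives $\operatorname{thick}^m_{\operatorname{mod}R}(C)\subseteq\operatorname{th}^m_{\operatorname{mod}R}(C)\subseteq\operatorname{mod}R$. In either case $\operatorname{mod}R=\operatorname{th}^m_{\operatorname{mod}R}(C)$, so $(1)$ holds with $G=C$ and $n=m$.

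For $(1)\Rightarrow(2)$: suppose $\operatorname{mod}R=\operatorname{th}^N_{\operatorname{mod}R}(G)$ for some $N$; again we may assume $N\geq 1$ and write $N=n+1$ with $n\geq 0$. Applying the implication $(a)\Rightarrow(c)$ of Proposition~\ref{prop:large_thickening_to_syzygy}(2) with this $n$, and setting $C=R\oplus\big(\bigoplus_{i=0}^{2n}\Omega_R^i(G)\big)$ and $m=2^n$, every module $M\in\operatorname{mod}R=\operatorname{th}^{n+1}_{\operatorname{mod}R}(G)$ lies in $\operatorname{thick}^{m+n}_{\operatorname{mod}R}(R\oplus C)$. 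Hence $\operatorname{mod}R=\operatorname{thick}^{2^n+n}_{\operatorname{mod}R}(R\oplus C)$, which is $(2)$ with generator $R\oplus C$ and exponent $2^n+n$; note $R\oplus C$ is finitely generated, being a finite direct sum of syzygies of $G$ together with $R$.

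The only things to watch are the degenerate small-exponent cases ($N=0$ or $m=0$, occurring only when $R=0$) and the bookkeeping of the exponent, which grows from $n+1$ in the $\operatorname{th}$-filtration to roughly $2^n$ in the $\operatorname{thick}$-filtration; there is no substantive obstacle, as the essential content has already been absorbed into Proposition~\ref{prop:large_thickening_to_syzygy} (and, if one wished to avoid the explicit exponent, into Lemma~\ref{lem:large_thickenings_filtration}).
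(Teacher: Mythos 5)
Your proposal is correct and follows the paper's own route: $(1)\Rightarrow(2)$ is exactly the implication $(a)\Rightarrow(c)$ of Proposition~\ref{prop:large_thickening_to_syzygy}(2) applied with $N=n+1$, and $(2)\Rightarrow(1)$ is the containment $\operatorname{thick}^m_{\operatorname{mod}R}(C)\subseteq\operatorname{th}^m_{\operatorname{mod}R}(C)$ from part (1) of that proposition (what the paper calls ``by construction''). Your extra bookkeeping of the exponent $2^n+n$ and the degenerate cases is fine but not substantively different.
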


\begin{proof}
    $(1)\implies (2)$ by \Cref{prop:large_thickening_to_syzygy}, and $(2)\implies (1)$ comes by construction.
\end{proof}

\begin{proof}[Proof of \Cref{cor:f_finite_strong_gen_module_cat}]
    By \Cref{ex:f_finite_ring}, we know that $R\oplus F_\ast^e R$ is a classical generator for $\operatorname{mod}R$. However, \Cref{cor:quasi_excellent_strong_gen} promises that there exist $G$ in $ \operatorname{mod}R$ and $n,s \geq 0$ such that $\Omega^s_R (\operatorname{mod}R) $ is contained in $ |G|_n$. \Cref{lem:large_thickenings_filtration} ensures there exists $l\geq 0$ such that $G$ in $\operatorname{th}_{\operatorname{mod}R}^l (R\oplus F_\ast^e R)$. Appealing to \Cref{prop:large_thickening_to_syzygy}, we see that there exists $c \geq 0$ such that $\Omega^l_R (G)$ is an object of $| R \oplus \bigoplus^c_{i=0} \Omega^i_R (F_\ast^e R) |_{l-1}$. Note that the short exact sequence
    \begin{displaymath}
        0 \to \Omega^1_R (F_\ast^e R) \to R^{\oplus s} \to F_\ast^e R \to 0
    \end{displaymath}
    tells us that $R$ belongs to $|\bigoplus^c_{i=0} \Omega^i_R (F_\ast^e R)|_2$. Hence, taking a few more extensions if needed, we can see that $\Omega^l_R (G)$ is an object of $|\bigoplus^c_{i=0} \Omega^i_R (F_\ast^e R) |_L$. By taking a few more syzygies, it follows that $\Omega^{l+s}_R (\operatorname{mod}R)$ is contained in $|\bigoplus^{s+c}_{i=0} \Omega^i_R (F_\ast^e R) |_N$ for some $N \gg 0$. These detail last details come from the fact that $\Omega^l_R (G)$ is an object of $| R \oplus \bigoplus^c_{i=0} \Omega^i_R (F_\ast^e R) |_{l-1}$ and $\Omega^\alpha_R (\operatorname{mod}R)\subseteq |\Omega^\beta_R (G) |_\gamma$ for $\alpha,\beta,\gamma\gg 0$.
\end{proof}

Next we prove  \Cref{prop:f_finite_isolated singularity_module_category}, for which we first need a lemma. 

\begin{lemma}\label{lem:isolated_singularity_lemma}
     Let $R$ be a Noetherian ring. The following hold true 
    \begin{enumerate}
        \item Let $n\geq 0$, and suppose that $0\to M \to C_0\to \cdots\to C_{n-1}\to N\to 0$ is an exact sequence in $\operatorname{mod} R$. Then 
        \begin{displaymath}
            \Omega^n_R (N)\in |M\oplus (\oplus_{i=0}^{n-1}\Omega_R^{i+1}(C_i))|_{n+1}.
        \end{displaymath}
    
        \item If $x_1,\ldots,x_n$ be an $M$-regular sequence in $R$, then 
        \begin{displaymath}
            \Omega^n_R(M/\mathbf x M)\in |\oplus_{i=0}^n \Omega^i_R M|_{n+1}.
        \end{displaymath}
    
        \item If $(R,\mathfrak m,k)$ is an $F$-finite local ring of depth $t$, then 
        \begin{displaymath}
            \Omega^t_R (k) \in |R \oplus (\oplus_{i=0}^t \Omega^i_R (F^e_* R))|_{t+1}
        \end{displaymath}
        for all $e\gg 0$.    
    \end{enumerate}
\end{lemma}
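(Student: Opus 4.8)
The plan is to establish the three statements in order, since each builds on the previous one. For part (1), I would argue by induction on $n$, splitting the long exact sequence $0\to M\to C_0\to\cdots\to C_{n-1}\to N\to 0$ at the first step into a short exact sequence $0\to M\to C_0\to M_1\to 0$ together with the truncated long exact sequence $0\to M_1\to C_1\to\cdots\to C_{n-1}\to N\to 0$ of length $n-1$. Applying Remark~\ref{rmk:ses_to_syzygy_ses} to pass to syzygies and then the induction hypothesis to the truncated sequence, I would track how $\Omega^n_R(N)$ is built from $\Omega^n_R(M_1)$ and $\Omega^n_R(C_0)$-type terms; combining with the short exact sequence $0\to\Omega^1_R(M_1)\to\text{(projective)}\to M_1\to 0$ shifted up by syzygies, and invoking the star-product identity $|\mathcal S|_a\star|\mathcal S|_b=|\mathcal S|_{a+b}$, should yield the claimed membership in $|M\oplus(\bigoplus_{i=0}^{n-1}\Omega^{i+1}_R(C_i))|_{n+1}$. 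Care is needed with the off-by-one in the indices $\Omega^{i+1}_R(C_i)$, which comes from the fact that each $C_i$ enters the resolution one step deeper.

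For part (2), I would induct on $n$ using the standard Koszul-type short exact sequences. When $n=1$, the element $x_1$ being $M$-regular gives $0\to M\xrightarrow{x_1} M\to M/x_1M\to 0$, so $\Omega^1_R(M/x_1M)\in|M\oplus\Omega^1_R(M)|_2$ directly from Remark~\ref{rmk:ses_to_syzygy_ses} and Remark~\ref{rmk:ses_cohomology_annihilator}-style manipulation (or more simply just applying $\Omega^1_R$ to the sequence). For the inductive step, set $\overline{M}=M/(x_1,\dots,x_{n-1})M$; then $x_n$ is $\overline M$-regular, giving $0\to\overline M\xrightarrow{x_n}\overline M\to\overline M/x_n\overline M\to 0$, hence $\Omega^n_R(M/\mathbf x M)\in|\Omega^{n}_R(\overline M)\oplus\Omega^{n-1}_R(\overline M)|_2$ after taking syzygies; feeding in the induction hypothesis $\Omega^{n-1}_R(\overline M)\in|\bigoplus_{i=0}^{n-1}\Omega^i_R M|_n$ and shifting by one syzygy where needed, together with Lemma~\ref{lem:ext_prop}, collapses everything into $|\bigoplus_{i=0}^n\Omega^i_R M|_{n+1}$. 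Alternatively, part (2) is a special case of part (1) applied to the Koszul complex of $x_1,\dots,x_n$ on $M$, since each $C_i$ is a direct sum of copies of $M$; this is probably the cleanest route and I would present it that way.

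For part (3), the key input is that for an $F$-finite local ring $(R,\mathfrak m,k)$ of depth $t$, the module $F^e_*R$ has depth $t$ as an $R$-module for all $e$, and its $\mathfrak m$-adic filtration lets one resolve $k$ through it. More precisely, for $e\gg 0$ the socle-type argument (or a minimal generator count) produces an exact sequence $0\to k\to (F^e_*R/\mathfrak m F^e_*R\text{-related terms})\to\cdots$, but the clean statement I would use is: choose a maximal regular sequence $x_1,\dots,x_t$ in $\mathfrak m$; then $R/(x_1,\dots,x_t)$ is Artinian and $k$ is built from it, while simultaneously $F^e_*R$ detects $R/(x_1,\dots,x_t)$ because for $e\gg 0$ the $R$-module $F^e_*R$ contains (up to the relevant thickening) a copy of $R/\mathfrak m^{[p^e]}\supseteq R/(x_1^{p^e},\dots)$, and $x_1^{p^e},\dots,x_t^{p^e}$ is again a regular sequence. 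Combining part (2) applied to the regular sequence $x_i^{p^e}$ on $R$ (to move between $R$ and $R/(x_i^{p^e})$) with part (1) applied to a finite exact sequence expressing $k$ in terms of $F^e_*R$ and its syzygies should give $\Omega^t_R(k)\in|R\oplus(\bigoplus_{i=0}^t\Omega^i_R(F^e_*R))|_{t+1}$.

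The main obstacle is part (3): making precise the sense in which $F^e_*R$ "finitely builds" $k$ with control on both the number of extensions and the number of syzygies, uniformly for $e\gg 0$. The depth count (ensuring $t$ syzygies suffice, via Auslander--Buchsbaum applied to $F^e_*R$) and the interaction between Frobenius powers of a regular sequence and the thickening degree are where the argument must be handled carefully; I expect to reduce it to parts (1) and (2) by exhibiting an explicit short exact sequence $0\to(\text{syzygy of }k)\to(\text{sum of }F^e_*R\text{-summands})\to k\to 0$ valid for large $e$, which is the standard mechanism behind $F^e_*R$ being a generator.
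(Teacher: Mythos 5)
Two points, one a bookkeeping flaw and one a genuine missing ingredient.

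In part (1), your induction splits the long exact sequence at the \emph{first} map, so the induction hypothesis applied to $0\to M_1\to C_1\to\cdots\to C_{n-1}\to N\to 0$ only gives $\Omega^{n-1}_R(N)\in |M_1\oplus(\oplus_{i=1}^{n-1}\Omega^i_R(C_i))|_{n}$, with no record of \emph{where} the generator $M_1$ is used. Since $\Omega_R(M_1)$ needs two steps to be built from $M\oplus\Omega_R(C_0)$, the only general substitution tool available (Lemma~\ref{lem:ext_prop}, or the star product with a single generating set) is multiplicative: it yields membership in $|\cdot|_{2n}$ (and, iterated through the induction, an exponential bound), not $|\cdot|_{n+1}$. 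To get $n+1$ you must either strengthen the induction hypothesis to record that $M$ occurs exactly once as the outermost factor (e.g.\ prove $\Omega^n_R(N)\in|\oplus_i\Omega^{i+1}_R(C_i)|_n\star|M|_1$), or split at the \emph{last} map: apply the inductive claim to $0\to M\to C_0\to\cdots\to C_{n-2}\to L\to 0$, whose generators already lie in the target set, and then add the single extension $0\to\Omega^n_R(C_{n-1})\to\Omega^n_R(N)\to\Omega^{n-1}_R(L)\to 0$ coming from $0\to L\to C_{n-1}\to N\to 0$; this is how the paper argues. The same multiplicativity problem afflicts your alternative inductive route to (2), but your preferred route there — the Koszul resolution $0\to M\to L_0\to\cdots\to L_{n-1}\to M/\mathbf{x}M\to 0$ with each $L_i\in\operatorname{add}M$, fed into (1) — is exactly the paper's argument and is fine once (1) is fixed.

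Part (3) is where the real gap lies, as you suspected. The uniform bound $t+1$ does not come from building $k$ out of $R/(x_1,\dots,x_t)$ (that costs Loewy-length-many extensions, not $t+1$), nor from a containment of $R/\mathfrak m^{[p^e]}$ in $F^e_*R$ (note $F^e_*R/\mathfrak m F^e_*R\cong F^e_*(R/\mathfrak m^{[p^e]})$ is a quotient, not a submodule, and such a containment is not what is needed). The missing ingredient is a splitting statement: by \cite[Corollary 3.3]{Takahashi/Yoshino:2004}, for all $e\gg0$ there is an $F^e_*R$-regular sequence $\mathbf{x}=x_1,\dots,x_t$ such that $k$ is a \emph{direct summand} of $F^e_*R/\mathbf{x}F^e_*R$. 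Given this, part (2) applied with $M=F^e_*R$ puts $\Omega^t_R(F^e_*R/\mathbf{x}F^e_*R)$ in $|\oplus_{i=0}^t\Omega^i_R(F^e_*R)|_{t+1}$, and since $\Omega^t_R(k)$ is (up to free summands) a direct summand of it, the claim follows. Your sketch correctly identifies that some such exact-sequence mechanism "behind $F^e_*R$ being a generator" is needed, but without the direct-summand statement (or an equivalent with explicit control on the number of extensions) the argument does not close; everything else in (3) — e.g.\ that $x_1,\dots,x_t$ acting on $F^e_*R$ corresponds to $x_1^{p^e},\dots,x_t^{p^e}$ on $R$ — is peripheral to this point.
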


\begin{proof}
    (1) Let us induct on $n$, and note the case where $n=0$ is obvious. If $n=1$, then the short exact sequence
    \begin{displaymath}
        0 \to M \to C_0 \to N \to 0
    \end{displaymath}
    yields another short exact sequence
    \begin{displaymath}
        0\to \Omega_R (C_0) \to \Omega_R (N) \to M\to 0    
    \end{displaymath}
    by repeatedly applying \cite[Proposition 2.2(1)]{Dao/Takahashi:2015b}. Hence, the object $\Omega_R (N)$ belongs to $|M\oplus \Omega_R (C_0)|_2$. 
    
    Assume $n\geq 2$. There are exact sequences 
    \begin{displaymath}
        \begin{aligned}
            & 0\to M \to C_0\to \cdots \to C_{n-2}\to L\to 0,
            \\& 0\to L \to C_{n-1}\to N\to 0.
        \end{aligned}
    \end{displaymath}
    The first exact sequence, in view of induction hypothesis, tells us that $\Omega_R^{n-1} (L)\in |M\oplus(\oplus_{i=0}^{n-2} \Omega_R^{i+1}(C_i))|_n$. The short exact sequence 
    \begin{displaymath}
        0\to L \to C_{n-1}\to N\to 0
    \end{displaymath}
    yields another short exact sequence,
    \begin{displaymath}
        0\to \Omega_R (C_{n-1})\to \Omega_R (N) \to L \to 0    
    \end{displaymath}
    by repeatedly applying \cite[Proposition 2.2(1)]{Dao/Takahashi:2015b}, and which in turn provides 
    \begin{displaymath}
        0\to \Omega^n_R (C_{n-1}) \to \Omega^n_R (N) \to \Omega^{n-1}_R  (L) \to 0    
    \end{displaymath}
    by \cite[Proposition 2.2(1)]{Dao/Takahashi:2015b} once more. Thus, $\Omega_R^n (N)$ is in $|M\oplus \Omega_R^n (C_{n-1})\oplus(\oplus_{i=0}^{n-2} \Omega_R^{i+1}(C_i))|_{n+1}$, finishing the inductive step. 

    (2) As $\mathbf x :=x_1,\ldots,x_n$ is $M$-regular, so we have an exact sequence arising from Koszul complex
    \begin{displaymath}
        0\to M \to L_0\to \cdots \to L_{n-1}\to M/\mathbf x M \to 0
    \end{displaymath}
    where each $L_i$ is a finite direct sum of copies of $M$. Hence, $\Omega^n_R(M/\mathbf x M)$ belongs to $|\oplus_{i=0}^n \Omega^i_R (M)|_{n+1}$ by part (1).  

    (3) By \cite[Corollary 3.3]{Takahashi/Yoshino:2004}, for every $e\gg 0$, there exists an $F^e_\ast R$-regular sequence $\mathbf x :=x_1,\cdots,x_t$ such that $k$ is a direct summand of $F^e_\ast R/(\mathbf x)$. Hence, $\Omega^t_R (k) \in |R \oplus (\oplus_{i=0}^t \Omega^i_R (F^e_\ast R))|_{t+1}$ by part (2) and remembering that syzygies are defined up to free summand. 
\end{proof} 

\begin{proof}[Proof of \Cref{prop:f_finite_isolated singularity_module_category}] Set $t=\operatorname{depth } R$ and $k$ to be the residue field of $R$.
As $R$ is $F$-finite, it is excellent, and hence $\operatorname{mod} R$ has a strong generator by \Cref{cor:quasi_excellent_strong_gen}. That is, $\Omega_R^n(\operatorname{mod} R) \subseteq |G|_a$  for some $G\in \operatorname{mod} R$ and integers $n,a>0$. By \cite[Theorem 3.2]{BHST:2016}, we get $\Omega_R^n(\operatorname{mod} R)\subseteq|\oplus_{i=0}^d\Omega^i_R (k)|_b$ for some $b\geq 1$. So, $\Omega^{n+t}(\operatorname{mod} R)\subseteq|\oplus_{i=0}^d\Omega^{i+t}_R (k)|_b$. Now, $\Omega_R^{i+t}(k)$ is locally free on punctured spectrum, and has depth at least $t$ for each $i\geq 0$. Thus, $G:=\oplus_{i=0}^d\Omega^{i+t}_R (k) \in |\oplus_{i=t}^d \Omega_R^i(k)|$ by \cite[Theorem 4.1]{BHST:2016}. Hence by \cite[Remark 2.11]{BHST:2016} we have $G\in |\oplus_{i=t}^d \Omega_R^i(k)|_v$ for some $v$. Thus, $\Omega^{n+t}(\operatorname{mod} R)\subseteq|G|_b\subseteq |\oplus_{i=t}^d \Omega_R^i(k)|_{bv}$. Since for all $e\gg 0$ we have $\Omega^t_R (k) \in |R \oplus (\oplus_{i=0}^t \Omega^i_R (F^e_\ast R))|_{t+1}$ by \Cref{lem:isolated_singularity_lemma}, so $\Omega_R^j(k)\in |R \oplus (\oplus_{i=j-t}^j \Omega^i_R (F^e_\ast R))|_{t+1}$ for all $j\geq t$. Thus, $\Omega^{n+t}(\operatorname{mod} R)\subseteq |\oplus_{i=t}^d \Omega_R^i(k)|_{bv}\subseteq |R\oplus(\oplus_{i=0}^d \Omega^i_R(F^e_\ast R))|_{r}$ for some $r$.  
\end{proof}

\begin{remark}
     Recall a module $M$ is said to satisfy \textit{Serre's condition $(S_n)$}, where $n\geq 0$, if for all prime ideals $\mathfrak{p}$ of $R$:
    \begin{displaymath}
        \operatorname{depth}_{R_{\mathfrak{p}}} M_{\mathfrak{p}} \geq \inf\{n,\dim R_{\mathfrak{p}}\}.
    \end{displaymath}
    It is clear that $M$ satisfies $(S_1)$ if, and only if, $\operatorname{Ass}_R(M)$ is contained in $\operatorname{Min}(R)$. For a Noetherian local ring $(R,\mathfrak{m})$ and $M$ an object of $\operatorname{mod} R$, the \textit{Loewy length of $M$} is given by:
    \begin{displaymath}
        \ell\ell(M):=\inf\{n\geq 0: \mathfrak{m}^n M=0\}.    
    \end{displaymath}
    Observe that $\ell\ell(M)<\infty$ if, and only if, $M$ has finite length $\ell(M)$. Moreover, it is always the case that $\ell\ell(M)\leq\ell(M)$. 
\end{remark}

\begin{lemma}\label{lem:bounds_via_min_primes}
    Suppose $R$ is a Noetherian ring and $M$ is an object of $\operatorname{mod}R$.
    \begin{enumerate}
        \item If $\mathfrak{p}\in \operatorname{Ass}_R(M)$, then there exists an $R$-submodule $N$ of $M$ such that $\operatorname{Ass}_R(N)=\operatorname{Ass}_R(M)\setminus \{\mathfrak p\}$ and $\operatorname{Ass}_R(M/N)=\{\mathfrak p\}$.


        \item If $M$ satisfies $(S_1)$, then there exists a filtration 
        \begin{displaymath}
            0=M_0\subseteq M_1\subseteq \cdots \subseteq M_l=M,
        \end{displaymath}
        where $l\leq \sum_{\mathfrak{p}\in \operatorname{Ass}_R(M)} \ell\ell(R_{\mathfrak{p}})$, and for every $i$, there exists $\mathfrak{p}_i$ in $\operatorname{Min}(R)$ such that $M_i/M_{i-1}$ belongs to $\operatorname{mod}R/\mathfrak{p}_i$.  
        \item If $R$ is local and $M$ satisfies $(S_1)$, then a filtration as in (2) can be chosen such that $M_i/M_{i-1}$ has positive depth for each $i$.
    \end{enumerate}  
\end{lemma}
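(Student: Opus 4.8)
The plan is to prove (1) by a maximality argument, bootstrap it to (2) by reducing to the coprimary case, and then obtain (3) by choosing the filtration in (2) with some care. For part~(1): since $M$ is Noetherian, the family of submodules $N\subseteq M$ with $\mathfrak p\notin\operatorname{Ass}_R(N)$ is nonempty (it contains $0$) and has a maximal element $N$. One has $N\neq M$ because $\mathfrak p\in\operatorname{Ass}_R(M)$, so $\operatorname{Ass}_R(M/N)\neq\varnothing$; and if some $\mathfrak q\neq\mathfrak p$ lay in $\operatorname{Ass}_R(M/N)$, lifting a submodule of $M/N$ isomorphic to $R/\mathfrak q$ to a submodule $N'\supsetneq N$ would give $\operatorname{Ass}_R(N')\subseteq\operatorname{Ass}_R(N)\cup\{\mathfrak q\}$, which avoids $\mathfrak p$ and contradicts maximality. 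Hence $\operatorname{Ass}_R(M/N)=\{\mathfrak p\}$, and then $\operatorname{Ass}_R(N)\subseteq\operatorname{Ass}_R(M)\subseteq\operatorname{Ass}_R(N)\cup\operatorname{Ass}_R(M/N)$ together with $\mathfrak p\notin\operatorname{Ass}_R(N)$ forces $\operatorname{Ass}_R(N)=\operatorname{Ass}_R(M)\setminus\{\mathfrak p\}$. This part is routine.

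For part~(2): I would peel off associated primes one at a time using (1). If $\operatorname{Ass}_R(M)=\{\mathfrak p_1,\dots,\mathfrak p_r\}$ with $r\geq1$, part~(1) gives $N\subseteq M$ with $\operatorname{Ass}_R(N)=\{\mathfrak p_1,\dots,\mathfrak p_{r-1}\}$ and $\operatorname{Ass}_R(M/N)=\{\mathfrak p_r\}$, and both again satisfy $(S_1)$ since their associated primes lie in $\operatorname{Ass}_R(M)\subseteq\operatorname{Min}(R)$. The key case is therefore a $\mathfrak q$-coprimary module $L$ with $\mathfrak q\in\operatorname{Min}(R)$: then $R_\mathfrak q$ is Artinian local, so $s:=\ell\ell(R_\mathfrak q)$ is finite with $(\mathfrak qR_\mathfrak q)^s=0$, and I would first observe $\mathfrak q^sL=0$ --- since $\operatorname{Ass}_R(\mathfrak q^sL)\subseteq\operatorname{Ass}_R(L)=\{\mathfrak q\}$ while $(\mathfrak q^sL)_\mathfrak q=(\mathfrak qR_\mathfrak q)^sL_\mathfrak q=0$, a nonzero $\mathfrak q^sL$ would have its only associated prime lying off its support. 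Then the annihilator chain
\begin{displaymath}
0=(0:_L\mathfrak q^0)\subseteq(0:_L\mathfrak q^1)\subseteq\cdots\subseteq(0:_L\mathfrak q^s)=L
\end{displaymath}
is a filtration of $L$ of length at most $\ell\ell(R_\mathfrak q)$ whose successive quotients are killed by $\mathfrak q$, hence lie in $\operatorname{mod}R/\mathfrak q$. Splicing the filtration of $N$ (obtained by induction on $r$) with the filtration of $M/N$ (lifted to submodules of $M$ containing $N$) yields a filtration of $M$ of total length at most $\sum_{\mathfrak p\in\operatorname{Ass}_R(M)}\ell\ell(R_\mathfrak p)$. (The $\mathfrak q$-adic chain $L\supseteq\mathfrak qL\supseteq\cdots\supseteq\mathfrak q^sL=0$ would serve (2) just as well; the annihilator chain is chosen with (3) in mind.)

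For part~(3): now $R$ is local, and I would assume $\dim R\geq1$, so that every $\mathfrak q\in\operatorname{Min}(R)$ is strictly contained in $\mathfrak m$. Carrying out the reduction and construction of (2) verbatim, it remains only to check that, for a $\mathfrak q$-coprimary layer $L$, each successive quotient $L_j/L_{j-1}$ of the annihilator filtration $L_j=(0:_L\mathfrak q^j)$ has positive depth, i.e.\ $\mathfrak m\notin\operatorname{Ass}_R(L_j/L_{j-1})$. If not, there would be $x\in(0:_L\mathfrak q^j)\setminus(0:_L\mathfrak q^{j-1})$ with $\mathfrak mx\subseteq(0:_L\mathfrak q^{j-1})$, hence $\mathfrak m\cdot(\mathfrak q^{j-1}x)=0$ while $\mathfrak q^{j-1}x\neq0$; then $\mathfrak m$ annihilates a nonzero submodule of $L$, so $\mathfrak m\in\operatorname{Ass}_R(L)=\{\mathfrak q\}$, contradicting $\mathfrak q\neq\mathfrak m$. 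The main obstacle is precisely this choice of filtration: the naive $\mathfrak q$-adic filtration can acquire $\mathfrak m$ as an embedded prime of a graded piece (for instance $\mathfrak m/\mathfrak q\mathfrak m$ for $L=\mathfrak m\subseteq k\llbracket x,y\rrbracket/(x^2)$), so it is the coprimary reduction together with the annihilator filtration that keeps the depth bookkeeping honest. I expect the identities $\mathfrak q^sL=0$ in (2) and $\mathfrak m\notin\operatorname{Ass}_R(L_j/L_{j-1})$ in (3) to be the only points needing an argument.
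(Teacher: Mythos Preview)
Your proof is correct and, for parts (1) and (2), follows essentially the same route as the paper: the paper cites a reference for (1) where you supply the standard maximality argument, and for (2) both proceed by induction on $|\operatorname{Ass}_R(M)|$, reduce to the $\mathfrak q$-coprimary case via (1), and establish $\mathfrak q^{\ell\ell(R_\mathfrak q)}L=0$ by the same localization/associated-prime contradiction. The only cosmetic difference in (2) is that the paper writes down the $\mathfrak q$-adic filtration $\mathfrak q^jL$ while you use the annihilator filtration $(0:_L\mathfrak q^j)$; as you yourself note, either works here.

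The genuine divergence is in (3). The paper does not argue this directly but instead invokes \cite[Theorem~5.1]{Kawasaki/Nakumara/Shimada:2019} and says the induction goes as in (2). Your argument is self-contained and more informative: by choosing the annihilator filtration rather than the $\mathfrak q$-adic one, you can push any hypothetical $\mathfrak m$-torsion in a subquotient down into $L$ itself via the identity $\mathfrak m\cdot(\mathfrak q^{j-1}x)=0$, contradicting $\operatorname{Ass}_R(L)=\{\mathfrak q\}\subsetneq\{\mathfrak m\}$. Your example $L=\mathfrak m$ in $k\llbracket x,y\rrbracket/(x^2)$ is apt and shows why the paper's $\mathfrak q$-adic filtration from (2) would not suffice for (3) without modification. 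What the paper's citation buys is brevity; what your approach buys is an elementary, dependency-free proof that also explains \emph{why} the particular filtration matters.
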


\begin{proof}
    (1) This follows from \cite[Lemma 3.5]{iIima/Matsui/Shimada/Takahashi:2022}.


    (2) This will be shown by induction on the cardinality $|\operatorname{Ass}_R(M)|$. Since $M= (0)$ is equivalent to $|\operatorname{Ass}_R(M)|=0$, the case where $|\operatorname{Ass}_R(M)|=1$ may be considered, say $\operatorname{Ass}_R(M)=\{\mathfrak{q}\}$. As $M$ satisfies $(S_1)$, $\mathfrak{q}$ belonging to $ \operatorname{Min}(R)$ ensures $R_{\mathfrak{q}}$ is Artinian local ring. Set $l:=\ell\ell(R_{\mathfrak{q}})$. First, it is checked $\mathfrak{q}^l M=(0)$. Indeed, if not, then the chain of inclusions
    \begin{displaymath}
        \emptyset \neq \operatorname{Ass}_R(\mathfrak{q}^l M)\subseteq \operatorname{Ass}_R(M)=\{\mathfrak{q}\}    
    \end{displaymath}
    implies $\{\mathfrak{q}\}=\operatorname{Ass}_R(\mathfrak{q}^l M)$, and so this guarantees $\mathfrak{q}^lM_{\mathfrak{q}}\neq (0)$, contradicting $\mathfrak{q}^lR_{\mathfrak{q}}=(0)$. Hence, this exhibits $\mathfrak{q}^l M=(0)$. There is the desired filtration, 
    \begin{displaymath}
        0=\mathfrak{q}^l M\subseteq \mathfrak{q}^{l-1}M\subseteq \cdots \subseteq \mathfrak{q}^0M=M.   
    \end{displaymath}
    Assume that $\operatorname{Ass}_R(M)=\{\mathfrak{q}_1,\cdots,\mathfrak{q}_t\}$. Since $M$ satisfies $(S_1)$, each $\mathfrak{q}_i$ is in $\operatorname{Min}(R)$.  By$(1)$, there exists a submodule $N$ of $M$ and  two chains of inclusions
    \begin{displaymath}
        \begin{aligned}
            &\operatorname{Ass}_R(N)\subseteq \{\mathfrak{q}_2,\cdots,\mathfrak{q}_t\}\subseteq \operatorname{Min}(R),
            \\& \operatorname{Ass}_R(M/N)\subseteq \{\mathfrak{q}_1\}\subseteq \operatorname{Min}(R).
        \end{aligned}
    \end{displaymath}
    Hence, both $N$ and $M/N$ satisfy $(S_1)$. By the induction hypothesis, one has filtrations 
    \begin{displaymath}
         0=N_0\subseteq N_1\subseteq \cdots \subseteq N_l=N
    \end{displaymath}
    and
    \begin{displaymath}
        0=\dfrac{M_0^\prime}{N}\subseteq \dfrac{M_1^\prime}{N}\subseteq \cdots \subseteq \dfrac{M_j^\prime}{N}=M/N,
    \end{displaymath}
    where
    \begin{enumerate}
        \item $l\leq \sum_{\mathfrak{q}\in \operatorname{Ass}_R(N)} \ell\ell(R_{\mathfrak{q}}) \leq \sum_{i=2}^t\ell\ell(R_{\mathfrak{q}_i})$,
        \item $j\leq \sum_{\mathfrak{q} \in \operatorname{Ass}_R(M/N)}\ell\ell(R_{\mathfrak{q}})\leq \ell\ell(R_{\mathfrak{q}_1})$.
    \end{enumerate}
    Furthermore, for every $i$ there exists $\mathfrak{p}_i$ in $\operatorname{Min}(R)$ such that $N_i/N_{i-1}$ is in $\operatorname{mod}R/\mathfrak{p}_i$, and similarly 
    \begin{displaymath}
        M^\prime_i/M^\prime_{i-1}\cong \dfrac{M_i^\prime/N}{M^\prime_{i-1}/N}\in \operatorname{mod}R/\mathfrak{p}_i^\prime
    \end{displaymath}
    where $\mathfrak{p}^\prime_i$ is in $\operatorname{Min}(R)$. There is a filtration with required factor modules and has $l+j+1$ many submodules in-between
    \begin{displaymath}
        0=N_0\subseteq N_1\subseteq \cdots \subseteq N_l=N=M^\prime_0\subseteq M^\prime_1\subseteq \cdots\subseteq M^\prime_j=M    
    \end{displaymath}
    satisfying
    \begin{displaymath}
        l+j\leq \sum_{i=2}^t\ell\ell(R_{\mathfrak{q}_i})+\ell\ell(R_{\mathfrak{q}_1})=\sum_{\mathfrak{p}\in \operatorname{Ass}_R(M)} \ell\ell(R_{\mathfrak{p}}).
    \end{displaymath}   

    (3) This follows by applying the relevant part of the proof of \cite[Theorem 5.1]{Kawasaki/Nakumara/Shimada:2019} and using induction similar to (2).   
\end{proof}

\begin{proof}[Proof of \Cref{prop:rouq_dim_bounds}]
     Observe that $(2)$ and $(3)$ are easily deduced from $(1)$. To prove $(1)$, an application of \Cref{lem:bounds_via_min_primes} to the case $M=R$ gives us a filtration, by ideals $I_i$ in $R$, of length at most $\sum_{\mathfrak{p} \in \operatorname{Min}(R)} \ell\ell(R_\mathfrak{p})$, such that each $I_i/I_{i-1}$ is an $R/\mathfrak{p}_i$-module, i.e. annihilated by $\mathfrak{p}_i$ as an $R$-module, for some $\mathfrak{p}_i$ in $\operatorname{Min}(R)$. Hence, for every object $X$ in $D^b(\operatorname{mod}R)$, we can regard each $I_i X/I_{i-1} X$ to be in $D^b( \operatorname{mod} R/\mathfrak{p}_i)$. The remaining of the proof follows verbatim of \cite[Proposition 3.9]{Aihara/Takahashi:2015}.
\end{proof}

\begin{remark}\label{rmk:bounds}
    \Cref{prop:rouq_dim_bounds} is a strengthening to \cite[Corollary 3.11]{Aihara/Takahashi:2015} in the case of Stanley-Reisner rings. Note that the case of a non-hypersurface Artinian local ring $R$, we have that the Rouquier dimension of $D^b (\operatorname{mod}R)$ is nonzero (otherwise $R$ has finite representation type); if $D^b(\operatorname{mod} R)= \langle G \rangle_1$ for some object $G$, then $\operatorname{mod} R= \operatorname{add}(\bigoplus_n H^n (G)$). However, the second claim of \Cref{prop:rouq_dim_bounds} is zero, which exhibits obstacles that arise if we only assume $\operatorname{Ass}(R)=\operatorname{Min}(R)$. Additionally, the first claim of \Cref{prop:rouq_dim_bounds} in the case of an Artinian local ring gives us the familiar upper bound of $\ell\ell(R)-1$ where $\ell\ell(R)$ is the Loewy length.
\end{remark}

\bibliographystyle{alpha}
\bibliography{mainbib}

\newcommand{\etalchar}[1]{$^{#1}$}
\begin{thebibliography}{BHST16}

\bibitem[ABIM10]{Avramov/Buchweitz/Iyengar/Miller:2010}
Luchezar~L. Avramov, Ragnar-Olaf Buchweitz, Srikanth~B. Iyengar, and Claudia Miller.
\newblock Homology of perfect complexes.
\newblock {\em Adv. Math.}, 223(5):1731--1781, 2010.

\bibitem[Aok21]{Aoki:2021}
Ko~Aoki.
\newblock Quasiexcellence implies strong generation.
\newblock {\em J. Reine Angew. Math.}, 780:133--138, 2021.

\bibitem[AT15]{Aihara/Takahashi:2015}
Takuma Aihara and Ryo Takahashi.
\newblock Generators and dimensions of derived categories of modules.
\newblock {\em Commun. Algebra}, 43(11):5003--5029, 2015.

\bibitem[BHST16]{BHST:2016}
Abdolnaser Bahlekeh, Ehsan Hakimian, Shokrollah Salarian, and Ryo Takahashi.
\newblock Annihilation of cohomology, generation of modules and finiteness of derived dimension.
\newblock {\em Q. J. Math.}, 67(3):387--404, 2016.

\bibitem[BIL{\etalchar{+}}23]{BILMP:2023}
Matthew~R. Ballard, Srikanth~B. Iyengar, Pat Lank, Alapan Mukhopadhyay, and Josh Pollitz.
\newblock High frobenius pushforwards generate the bounded derived category.
\newblock \href{https://arxiv.org/abs/2303.18085}{arXiv: 2303.18085}, 2023.

\bibitem[BVdB03]{Bondal/VandenBergh:2003}
A.~Bondal and M.~Van~den Bergh.
\newblock Generators and representability of functors in commutative and noncommutative geometry.
\newblock {\em Mosc. Math. J.}, 3(1):1--36, 2003.

\bibitem[Chr98]{Christensen:1998}
J.~Daniel Christensen.
\newblock Ideals in triangulated categories: {Phantoms}, ghosts and skeleta.
\newblock {\em Adv. Math.}, 136(2):284--339, 1998.

\bibitem[Die87]{Dieterich/1987}
Ernst Dieterich.
\newblock Reduction of isolated singularities.
\newblock {\em Comment. Math. Helv.}, 62:654--676, 1987.

\bibitem[DL24]{Dey/Lank:2024}
Souvik Dey and Pat Lank.
\newblock Closedness of the singular locus and generation for derived categories.
\newblock \href{https://arxiv.org/abs/2403.19564}{arXiv: 2403.19564}, 2024.
\newblock Accepted for publication in J. of Algebra.

\bibitem[DT14]{Dao/Takahashi:2014}
Hailong Dao and Ryo Takahashi.
\newblock The radius of a subcategory of modules.
\newblock {\em Algebra Number Theory}, 8(1):141--172, 2014.

\bibitem[DT15]{Dao/Takahashi:2015b}
Hailong Dao and Ryo Takahashi.
\newblock Classification of resolving subcategories and grade consistent functions.
\newblock {\em Int. Math. Res. Not.}, 2015(1):119--149, 2015.

\bibitem[EL18]{Elagin/Lunts:2018}
Alekse{\u{\i}}~D. Elagin and Valeri{\u{\i}}~A. Lunts.
\newblock Regular subcategories in bounded derived categories of affine schemes.
\newblock {\em Sb. Math.}, 209(12):1756--1782, 2018.

\bibitem[IMST24]{iIima/Matsui/Shimada/Takahashi:2022}
Kei-Ichiro Iima, Hiroki Matsui, Kaori Shimada, and Ryo Takahashi.
\newblock When is a subcategory {Serre} or torsion-free?
\newblock {\em Publ. Res. Inst. Math. Sci.}, 60(4):831--857, 2024.

\bibitem[IT16]{Iyengar/Takahashi:2016}
Srikanth~B. Iyengar and Ryo Takahashi.
\newblock Annihilation of cohomology and strong generation of module categories.
\newblock {\em Int. Math. Res. Not. IMRN}, pages 499--535, 2016.

\bibitem[KNS19]{Kawasaki/Nakumara/Shimada:2019}
Takesi Kawasaki, Yukio Nakamura, and Kaori Shimada.
\newblock The dimension of the category of maximal {C}ohen-{M}acaulay modules over {C}ohen-{M}acaulay local rings of dimension one.
\newblock {\em J. Algebra}, 532:8--21, 2019.

\bibitem[KS13]{Krause/Stevenson:2013}
Henning Krause and Greg Stevenson.
\newblock A note on thick subcategories of stable derived categories.
\newblock {\em Nagoya Math. J.}, 212:87--96, 2013.

\bibitem[Kun76]{Kunz:1976}
Ernst Kunz.
\newblock On noetherian rings of characteristic $p$.
\newblock {\em Amer. J. Math.}, 98:999--1013, 1976.

\bibitem[Mat86]{Matsumura:1986}
Hideyuki Matsumura.
\newblock {\em Commutative ring theory. {Transl}. from the {Japanese} by {M}. {Reid}}, volume~8 of {\em Camb. Stud. Adv. Math.}
\newblock Cambridge University Press, Cambridge, 1986.

\bibitem[Rou08]{Rouquier:2008}
Rapha{\"e}l Rouquier.
\newblock Dimensions of triangulated categories.
\newblock {\em J. \(K\)-Theory}, 1(2):193--256, 2008.

\bibitem[Tak23]{Takahashi:2023}
Ryo Takahashi.
\newblock Dominant local rings and subcategory classification.
\newblock {\em Int. Math. Res. Not.}, 2023(9):7259--7318, 2023.

\bibitem[TY04]{Takahashi/Yoshino:2004}
Ryo Takahashi and Yuji Yoshino.
\newblock Characterizing {C}ohen-{M}acaulay local rings by {F}robenius maps.
\newblock {\em Proc. Amer. Math. Soc.}, 132(11):3177--3187, 2004.

\end{thebibliography}

\end{document}